\DeclareMathOperator{\codim}{codim}
\DeclareMathOperator{\Spec}{Spec}
\DeclareMathAlphabet{\mathpzc}{OT1}{pzc}{m}{it}
\newcommand{\mbb}{\mathbb}
\newcommand{\mc}{\mathcal}
\newcommand{\FS}{\mathcal{O}}
\newcommand{\id}{{\rm id}}
\newcommand{\Tor}{{\rm Tor}}
\newcommand{\ra}{\rightarrow}
\newcommand{\perm}{\mathfrak{S}}
\newcommand{\trest}{{\big |} }
\newcommand{\mf}{\mathfrak}
\newcommand{\tens}{\otimes}
\newtheorem{theorem}{Theorem}[section]
\newtheorem{lemma}[theorem]{Lemma}
\newtheorem{pps}[theorem]{Proposition}
\newtheorem{crl}[theorem]{Corollary}
\newtheorem{conj}{Conjecture}
\theoremstyle{definition}
\newtheorem{definition}[theorem]{Definition}
\theoremstyle{remark}
\newtheorem{remark}[theorem]{Remark}
\numberwithin{equation}{section}
\mathchardef\phi="0127
\mathchardef\varphi="011E
\mathchardef\alpha="710B
\mathchardef\beta="710C
\mathchardef\gamma="710D
\mathchardef\delta="710E
\mathchardef\epsilon="7122
\mathchardef\zeta="7110
\mathchardef\eta="7111
\mathchardef\theta="7112
\mathchardef\iota="7113
\mathchardef\kappa="7114
\mathchardef\lambda="7115
\mathchardef\mu="7116
\mathchardef\nu="7117
\mathchardef\xi="7118
\mathchardef\pi="7119
\mathchardef\rho="711A
\mathchardef\sigma="711B
\mathchardef\tau="711C
\mathchardef\upsilon="711D
\mathchardef\chi="711F
\mathchardef\psi="7120
\mathchardef\omega="7121
\mathchardef\varepsilon="710F
\mathchardef\vartheta="7123
\mathchardef\varpi="7124
\mathchardef\varrho="7125
\mathchardef\varsigma="7126
\theoremstyle{definition}
\theoremstyle{remark}
\numberwithin{equation}{section}
\DeclareMathOperator{\lct}{lct}
\DeclareMathOperator{\fpt}{fpt}
\DeclareMathOperator{\ord}{ord}
\DeclareMathOperator{\Proj}{Proj}
\renewcommand{\div}{\mathrm{div} \,}
\newcommand{\wh}{\widehat}
\title{Singularities of the Isospectral Hilbert Scheme}
\author{Luca Scala}
\date{}
\begin{document}
\maketitle

\begin{abstract}We study the singularities of the isospectral Hilbert scheme $B^n$ 
of $n$ points over a smooth algebraic surface and we prove 
that they are canonical if $n \leq 5$, log-canonical if $n \leq 7$ and not log-canonical if $n \geq 9$. We describe as well two explicit log-resolutions 
of $B^3$, one crepant and the other $\perm_3$-equivariant. 
\end{abstract}

\normalsize
\section*{Introduction}
The aim of this work is the study 
of the singularities of the isospectral Hilbert scheme of $n$ points over a smooth complex algebraic surface. If $X$ is such a surface,   
the isospectral Hilbert scheme $B^n$  can be defined as the blow-up  of the product variety $X^n$ along the big diagonal $\Delta_n$. The isospectral Hilbert scheme has been introduced  by Haiman in his works \cite{Haiman1999} and \cite{Haiman2001} on Macdonald polynomials; it was proven in \cite{Haiman2001} that $B^n$ is normal, Cohen-Macauley and Gorenstein. 
Haiman himself asked in \cite[Section 1]{Haiman2004} whether the Rees algebra $\oplus_{i \geq 0} \mc{I}^i_{\Delta_n}$ were of $F$-rational type; this would be equivalent
of $\Spec \oplus_{i \geq 0} \mc{I}^i_{\Delta_n}$ having rational singularities \cite{Smith1997, Hara1998, MehtaSrinivas1997, SchwedeTakagi2008} and would imply \cite[Proposition 1.2]{Hyry1999} that $B^n = \Proj ( \oplus_{i \geq 0} \mc{I}^i_{\Delta_n} )$
would have rational or, equivalently, canonical singularities. 
It is an open problem whether $B^n$ has canonical or log-canonical singularities. 
In this work we partially answer these questions. 

Apart from being  interesting in its own, the investigation of the singularities of $B^n$ is in tight relation with a number of interesting problems. The first and more immediate --- which is one of the main motivations of this work --- is the potential application 
  to vanishing theorems, since sufficiently good singularities would allow the use of Kawamata-Viehweg or Kodaira vanishing over $B^n$; 
  an example of this use already appeared  in \cite[Section 5.2]{Scalaarxiv2015}. 
  
  A second source of interest, which also offers an effective way to address the problem, is the link with the study of 
  log-canonical thresholds of subspace arrangements. 
 Since $B^n$ is the blow-up of the big diagonal in $X^n$, it turns out that the scheme
$B^n$ --- or, in other words, the pair $(B^n, \emptyset)$ --- has exactly the same kind of singularities of the pair
$(X^n, \mc{I}_{\Delta_n})$.  Now, one can determine the kind of singularities of the pair $(X^n, \mc{I}_{\Delta_n})$ by studying 
its  log-canonical threshold at each point. Since this problem is now local in nature, one can take $X$ as the affine plane $\mbb{C}^2$: in this case the big diagonal $\Delta_n$ can be thought as a subspace arrangement. This problem is similar with that of finding log-canonical thresholds of hyperplane arrangements, already studied and solved in \cite{Mustata2006}. On the other hand, there are not many examples in literature of computations of log-canonical thresholds of  arrangements of subspaces of higher codimension: an exception is the study of configurations of lines through the origin in $\mbb{C}^3$ by Teitler \cite{Teitler2007}.  An important part of his work deals with the understanding of the embedded components that appear when pulling back the ideal of the configuration of lines to the blow-up of the origin in $\mbb{C}^3$; the presence of embedded components is the main difficulty that hinders an explicit log-resolution of the ideal of the configuration. 

The case of the pair  $(X^n, \mc{I}_{\Delta_n})$ ---  for $X = \mbb{C}^2$ --- is similar because we deal with an arrangement of codimension 2 subspaces $\Delta_n$ in $\mbb{C}^{2n}$, but  the complexity of the problem grows very rapidly with $n$. However, for $X = \mbb{C}^2$, Haiman gave a precise description of a set of generators for the ideal $\mc{I}_{\Delta_n}$, from which we can deduce the order of the ideal $\mc{I}_{\Delta_n}$ at each point. As a consequence, we can establish the upper bound (proposition \ref{pps: lctinequality}) 
$$ \lct (X^n, \mc{I}_{\Delta_n}) \leq \frac{2n - 2}{d_n} $$for the log-canonical threshold of the pair $(X^n, \mc{I}_{\Delta_n})$. Here 
$d_n$ is the natural number defined in remark~\ref{rmk: mingenerators}.  We actually believe that the above inequality is in fact an equality (Conjecture \ref{conj1}). This would imply that the singularities of $B^n$ are canonical if and only if $n \leq 7$, log-canonical if $n \leq 8$ and not log-canonical if $n \geq 9$ (Conjecture 2). We can actually prove --- and this is the main result of this work~---

\vspace{0.2cm} \noindent
{\bf Theorem \ref{thm: logcanonical}}. {\it The singularities of the isospectral Hilbert scheme $B^n$ are canonical if $n \leq 5$ and log-canonical if $n \leq 7$. 
For $n \geq 9$ they are not log-canonical.} 

\vspace{0.2cm}
Not unexpectedly, this problem  is in close relation with the geometry of the Hilbert scheme of points as well. 
Indeed, after a result by Song in \cite{SongThesis}, results about the pair $(X^n, \mc{I}_{\Delta_n})$ 
can be precisely translated into results about the pair $(X^{[n]}, \mc{I}_{\partial X^{[n]}} )$, where $X^{[n]}$ is the Hilbert scheme of $n$ points over $X$ and $\partial X^{[n]}$ is its boundary. In particular the previous upper bound for $\lct(X^n, \mc{I}_{\Delta_n})$ implies the  upper bound
$\lct (X^{[n]}, \mc{I}_{\partial X^{[n]}}) \leq (n-1)/d_n$. The mentioned conjecture on $\lct(X^n, \mc{I}_{\Delta_n})$ 
would imply that the last upper bound is actually an equality.

Finally, the problem of understanding the singularities of the isospectral Hilbert scheme should be a drive to the construction of an explicit $\perm_n$-equivariant log-resolution of $B^n$, or --- what is equivalent --- to an explicit $\perm_n$-equivariant log-resolution $f: Y \rTo X^n$ of the pair $(X^n, \mc{I}_{\Delta_n})$. This would be a deep and importat result on many levels. Firstly, it would provide another important compactification of the configuration space $F(X, n) := X^n \setminus \Delta_n$ after the celebrated Fulton-MacPherson compactification $X[n]$ (see \cite{FultonMacPherson1994}): the latter is not, unfortunately, a log-resolution of the pair $(X^n, \mc{I}_{\Delta_n})$, since, when computing the inverse image of the ideal 
$\mc{I}_{\Delta_n}$ over $X[n]$ embedded components appear. Hence an explicit $\perm_n$-equivariant log-resolution of $(X^n, \mc{I}_{\Delta_n})$ might be built by further blowing-up the Fulton-MacPherson compactification in order to get rid of these components; however, it is a very difficult problem to track and control the embedded components that arise in this way. 

Secondly, supposing that the stabilizers of the $\perm_n$-action on the resolution 
$Y$ were trivial, then, passing to the quotient would provide an explicit resolution $\hat{f}: Y/\perm_n \rTo S^n X$ of the symmetric variety. We mention that, in general, no such explicit resolution is known yet. In \cite{Ulyanov2002} Ulyanov made a step forward proposing a refinement of the Fulton-MacPherson compactification in a way that the stabilizers  of the natural $\perm_n$-action are abelian, and not just solvable. 

Finally, such a resolution $f: Y \rTo X^n$ might be useful for a better understanding of  ideal sheaves of 
subschemes supported in  big diagonals of the form $\FS(-\lambda \Delta)$, appeared in the work \cite{Scalaarxiv2015}.

In the final section of this article we provide two different log-resolutions of the pair $(X^3, \mc{I}_{\Delta_3})$, and hence of $B^3$: one crepant, 
the other $\perm_3$-equivariant. 

We work over the field of complex numbers. By point we always mean a closed point.

\paragraph{Acknowledgements.} I would like to sincerely 
thank Lei Song for inviting me to University of Kansas, for his interest in my work and for 
communicating the results mentioned in subsection \ref{relationHilbert}. This work is partially supported by CNPq,  grant 307795/2012-8. 

\section{Singularities of pairs and log-canonical thresholds}
\begin{definition} \cite{Kollar1997, LazarsfeldPAGII} Let $M$ be an irreducible complex algebraic variety, and $\mf{a}$ an ideal sheaf of $\FS_M$. A \emph{log-resolution} of 
the pair $(M, \mf{a})$ is a projective birational map $f : Y \rTo M$ such that $Y$ is nonsingular,  the exceptional locus $\mathrm{exc}(f)$ is a divisor, the ideal sheaf $f^{-1} \mf{a} := \mf{a} \cdot \FS_Y$ is equal  to $\FS_Y(-F) $, where $F$ is an effective divisor on $Y$ with the property that $F + \mathrm{exc}(f)$ has simple normal crossing support. 
\end{definition}

\begin{definition}\label{dfn: cansingularities}Let $M$ be a  complex algebraic variety, normal and irreducible; let $K_M$ be its canonical divisor. Suppose that $M$ is $\mbb{Q}$-Gorenstein, that is, for some $r \in \mbb{N}^*$, 
$r K_M$ is Cartier. Let $\mf{a}$ be an ideal sheaf of $\FS_M$. Consider a log-resolution $f: Y \rTo M$ of the pair $(M, \mf{a})$. 
Then, as $\mbb{Q}$-Cartier divisors,  $$K_Y - f^*(K_M) + f^{-1}(\mf{a}) = \sum_{i} a_i E_i \;,$$where $E_i$ are irreducible component of a simple normal crossing divisor and $a_i \in \mbb{Q}$. 
We say that the singularities of the pair $(M, \mf{a})$ are  \emph{canonical} if $a_i \geq 0$; 
 \emph{log-canonical} if $a_i \geq -1$. 
\end{definition}

\begin{definition}Let $M$ be a smooth algebraic variety and $\mf{a}$ an ideal sheaf of $\FS_M$. Let $c \in \mbb{Q}$, $c >0$. Let $f: Y \rTo M$ be a log-resolution of the pair $(M, \mf{a})$ and let $F$ be the effective Cartier divisor on $Y$ such that $f^{-1} \mf{a} = \FS_Y(-F)$. Then the \emph{multiplier ideal sheaf} 
$\mc{J}(c \cdot \mf{a})$ associated to $c$ and $\mf{a}$ is the ideal sheaf of $\FS_M$ defined as
$$ \mc{J}(c \cdot \mf{a}) : = f_* \FS_Y(K_{Y/M} - [c \cdot F]) \;,$$where $[c \cdot F]$ is the integral part of the $\mbb{Q}$-divisor $F$. 
The definition just given does not depend on the choice of the log-resolution \cite{LazarsfeldPAGII}. 
For $x \in M$, the \emph{log-canonical threshold} of the pair $(M, \mf{a})$ at the point $x$ is defined as
$$ \lct_x (M, \mf{a}) := \sup \{ c \in \mbb{Q} \; | \; \mc{J} (c \cdot \mf{a})_x  = \FS_{M, x} \} = \inf \{ c \in \mbb{Q} \; | \; \mc{J} (c \cdot \mf{a})_x  \subset \mf{m}_x \} \;.$$Define, moreover, $\lct(M, \mf{a}) := \inf_{x \in M} \lct_x(M, \mf{a})$. 
\end{definition}
\begin{remark}\label{rmk: lctminima}In the above definition of $\lct_x(M, \mf{a})$ the $\inf$ are actually minima \cite[Example 9.3.16]{LazarsfeldPAGII}. 
\end{remark}

\begin{pps}\label{pps: blowup}Let $M$ be a smooth complex algebraic variety and let $\mf{a}$ be an ideal sheaf of $\FS_M$. Consider the blow-up $g: B := \mathrm{Bl}_{\mf{a}} M \rTo M$ of $M$ along the ideal $\mf{a}$, with exceptional divisor $E$. 
Suppose that $B$ is irreducible, normal and Gorenstein; 
suppose moreover that $K_B = g^* K_M + \FS_B(E)$. 
Then 
$B$ has (log-) canonical singularities if and only if the pair $(M, \mf{a})$ has. 
\end{pps}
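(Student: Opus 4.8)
The plan is to compute the relevant invariants of both sides on one and the same smooth model dominating $M$ and $B$, and to observe that the two discrepancy divisors literally coincide there. Recall first that whether a pair has canonical or log-canonical singularities can be read off from any single log-resolution. By the universal property of the blow-up one has $\mf{a}\cdot\FS_B = \FS_B(-E)$; in particular $\FS_B(-E)$ is the invertible sheaf attached to an ideal of $\FS_B$, and a log-resolution of the pair $(B, \mf{a}\cdot\FS_B)$ is nothing but a resolution $h\colon Y \rTo B$ of the singularities of $B$ for which $\mathrm{exc}(h) + h^{*}E$ has simple normal crossing support. Fix such an $h$ (it exists since we work over $\mbb{C}$), put $f := g\circ h\colon Y \rTo M$ and $F := h^{*}E$, an effective Cartier divisor on $Y$. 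Then $f^{-1}\mf{a} = h^{*}(\mf{a}\cdot\FS_B) = \FS_Y(-F)$, $Y$ is smooth, $f$ is projective and birational, and, since $\mathrm{exc}(f) \subseteq \mathrm{exc}(h) \cup h^{-1}(\mathrm{exc}(g))$ and $\mathrm{exc}(g) \subseteq \Supp E$, the divisor $F + \mathrm{exc}(f)$ is supported on $\mathrm{exc}(h) \cup \Supp F$; hence (blowing $Y$ up further if needed so that $\mathrm{exc}(f)$ is a divisor) $f$ is a log-resolution of $(M, \mf{a})$.

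The heart of the matter is an identity of divisors on $Y$. On the one hand, the divisor attached to the pair $(M, \mf{a})$ in Definition~\ref{dfn: cansingularities} is
$$ \sum_i a_i E_i \;=\; K_Y - f^{*}(K_M) + f^{-1}(\mf{a}) \;=\; K_Y - f^{*}(K_M) - F \;. $$
On the other hand, applying Definition~\ref{dfn: cansingularities} to the pair $(B, \emptyset)$ --- that is, to the $\mbb{Q}$-Gorenstein variety $B$, for which a log-resolution is just a resolution with exceptional divisor of simple normal crossings, so that $h$ qualifies --- the discrepancy divisor of $B$ is
$$ \sum_i b_i E_i \;=\; K_Y - h^{*}K_B \;. $$
Now the hypothesis $K_B = g^{*}K_M + \FS_B(E)$ says precisely that $K_{B/M} = E$ as Cartier divisors on $B$; pulling back along $h$ gives $h^{*}K_B = h^{*}g^{*}K_M + h^{*}E = f^{*}(K_M) + F$, whence
$$ K_Y - f^{*}(K_M) - F \;=\; K_Y - h^{*}K_B \;. $$
Therefore $\sum_i a_i E_i = \sum_i b_i E_i$, and comparing coefficients of the prime divisors of $Y$ yields $a_i = b_i$ for every $i$.

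The conclusion is then immediate: $a_i \geq 0$ for all $i$ if and only if $b_i \geq 0$ for all $i$, and $a_i \geq -1$ for all $i$ if and only if $b_i \geq -1$ for all $i$; that is, the pair $(M, \mf{a})$ has canonical (respectively log-canonical) singularities if and only if $B$ does. I do not anticipate a substantial obstacle: once the universal property of the blow-up and the hypothesis $K_{B/M} = E$ are available, the argument is essentially formal. The only points needing a little care are the verification that the composite $f = g\circ h$ is genuinely a log-resolution of $(M, \mf{a})$ --- which dictates the choice of $h$ made above --- and the observation that no separate bookkeeping is required to match up the two families $\{E_i\}$ and their multiplicities, this being entirely subsumed in the single identity $K_Y - f^{*}(K_M) - F = K_Y - h^{*}K_B$ on $Y$; in particular the strict transform of $E$, which is a component of $F$, occurs with multiplicity $0$ on both sides precisely because $K_{B/M} = E$.
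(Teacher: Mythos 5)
Your proof is correct and follows essentially the same route as the paper: take a log-resolution $h$ of $(B,E)$, observe that $f=g\circ h$ is a log-resolution of $(M,\mf{a})$, and use the hypothesis $K_B=g^*K_M+\FS_B(E)$ to identify $K_Y-h^*K_B$ with $K_Y-f^*K_M+f^{-1}(\mf{a})$, so the two discrepancy divisors coincide. The only cosmetic difference is your hedge about blowing up further to make $\mathrm{exc}(f)$ divisorial, which is unnecessary since $Y$ and $M$ are smooth and hence $\mathrm{exc}(f)$ is automatically a divisor, as the paper notes.
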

\begin{proof}Let $h: Y \rTo B$ be a log-resolution of the pair $(B, E)$. Consider the map $f = g \circ h$. We claim that $f$ is a log-resolution of the pair $(M, \mf{a})$. Indeed 
$\mathrm{exc}(f)$ is divisorial, since $f$ is a birational morphism between smooth varieties. Moreover, set-theoretically, 
$\mathrm{exc}(f) = \mathrm{exc}(h) \cup h^{-1}\mathrm{exc}(g) =  \mathrm{exc}(h) \cup h^{-1} E$, which --- since $h$ is a log-resolution of $(B, E)$ --- is a  divisor with snc support. Hence $\mathrm{exc}(f)$ is a divisor with snc support. Moreover 
$f^{-1} \mf{a} = h^{-1} g^{-1} \mf{a} = h^{-1} \mc{I}_E = \FS_B(-h^*E)$ and $h^*E$ is an effective  Cartier divisor. Finally, as Cartier divisors, $\mathrm{exc}(f) + h^{*} E$ coincides 
 with $\mathrm{exc}(h) + 2 h^{*} E$, which has the same support as $\mathrm{exc}(f)$ and hence is a divisor with snc support. 
Then 
\begin{align*} K_Y - h^* K_B = \: &  K_Y - h^*g^*K_M - h^*\FS_B(E)   
 =  K_Y - f^*K_M + f^{-1} \mf{a}
 \end{align*}which allows us to conclude. 
\end{proof}
\section{The isospectral Hilbert scheme}
\begin{definition}Let $n \in \mbb{N}$, $n \geq 2$. Let $X$ be a smooth complex algebraic surface. Let $\Delta_n$ be the big diagonal in $X^n$, that is, $\Delta_n$ is the scheme-theoretic union of pairwise diagonals $\Delta_{ij}$, $1 \leq i < j \leq n$. The \emph{isospectral Hilbert scheme $B^n$} is 
the blow up of $X^n$ along the big diagonal $\Delta_n$. 
\end{definition}
\begin{remark}It is well known that the isospectral Hilbert scheme $B^n$ is irreducible, normal, Cohen-Macaulay and Gorenstein \cite{Haiman2001}.
\end{remark}
\subsection{The big diagonal in $X^n$}
As an immediate consequence of proposition \ref{pps: blowup}, we have a very precise correspondence between the singularities of the isospectral Hilbert scheme $B^n$ and those of the pair $(X^n, \mc{I}_{\Delta_n})$. 
\begin{crl}\label{crl: isospectral}The isospectral Hilbert scheme $B^n$ has (log-) canonical singularities if and only if the pair $(X^n, \mc{I}_{\Delta_n})$ has (log-) canonical singularities. 
\end{crl}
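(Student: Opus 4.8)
The statement to prove is Corollary~\ref{crl: isospectral}: the isospectral Hilbert scheme $B^n$ has (log-)canonical singularities if and only if the pair $(X^n, \mc{I}_{\Delta_n})$ has.

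\medskip

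The plan is to deduce this directly from Proposition~\ref{pps: blowup} by verifying its hypotheses in the case $M = X^n$, $\mf{a} = \mc{I}_{\Delta_n}$, $B = \mathrm{Bl}_{\Delta_n} X^n = B^n$. Since $X$ is a smooth surface, $X^n$ is smooth, so $M$ is a smooth complex algebraic variety as required. By the Remark immediately preceding, $B^n$ is irreducible, normal, Cohen--Macaulay and Gorenstein, which supplies the ``irreducible, normal and Gorenstein'' hypothesis of Proposition~\ref{pps: blowup}. The only remaining point is the condition on canonical divisors, namely $K_{B^n} = g^* K_{X^n} + \FS_{B^n}(E)$, where $g : B^n \rTo X^n$ is the blow-down and $E$ the exceptional divisor.

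\medskip

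To establish $K_{B^n} = g^* K_{X^n} + \FS_{B^n}(E)$, I would argue as follows. The big diagonal $\Delta_n \subset X^n$ has pure codimension $2$ (each pairwise diagonal $\Delta_{ij}$ has codimension $2$, since $X$ is a surface). Outside the locus where two or more of the hyperdiagonals $\Delta_{ij}$ meet --- i.e. over the open dense locus of $X^n$ parametrizing configurations with at most one pair of coinciding points --- the subscheme $\Delta_n$ is smooth of codimension $2$, and there the blow-up formula for the canonical bundle of the blow-up of a smooth variety along a smooth center of codimension $c$ gives $K_{B^n} = g^*K_{X^n} + (c-1)E = g^*K_{X^n} + E$. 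Thus the identity $K_{B^n} - g^*K_{X^n} - \FS_{B^n}(E) = 0$ holds on a dense open subset of $B^n$ whose complement has codimension $\geq 2$ in $B^n$. Since $B^n$ is normal (hence $S_2$), a Weil divisor that is trivial away from a codimension-$2$ closed subset is trivial, so the identity extends to all of $B^n$; more precisely, $K_{B^n}$ and $g^*K_{X^n} + \FS_{B^n}(E)$ are both $\mathbb{Q}$-Cartier divisors (using that $B^n$ is Gorenstein, so $K_{B^n}$ is Cartier) agreeing in codimension one, hence linearly equivalent. One can alternatively invoke Haiman's description of $B^n$ together with the Gorenstein property to identify the dualizing sheaf directly.

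\medskip

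With all hypotheses of Proposition~\ref{pps: blowup} verified, the corollary follows at once: $B^n = \mathrm{Bl}_{\mc{I}_{\Delta_n}} X^n$ has (log-)canonical singularities if and only if the pair $(X^n, \mc{I}_{\Delta_n})$ does. The main --- indeed the only non-formal --- obstacle is the verification of the canonical bundle formula $K_{B^n} = g^*K_{X^n} + \FS_{B^n}(E)$; everything else is a direct citation of the preceding Remark and Proposition. I expect this verification to be short given that $\Delta_n$ is smooth of codimension $2$ in codimension $\leq 1$ on $B^n$ and that $B^n$ is normal and Gorenstein, but it is the step that genuinely uses the geometry of the situation rather than pure formalism.
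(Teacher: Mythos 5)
Your overall route is the same as the paper's: the corollary is obtained by feeding $M = X^n$, $\mf{a} = \mc{I}_{\Delta_n}$, $B = B^n$ into Proposition \ref{pps: blowup}, with Haiman's results supplying irreducibility, normality and Gorensteinness; the paper treats the remaining hypothesis $K_{B^n} = g^*K_{X^n} + \FS_{B^n}(E)$ as known (it is used again without comment in Remark \ref{rmk: universal}). Where you differ is that you try to verify this identity yourself, and that verification has a gap at its only substantive step: you assert that the preimage in $B^n$ of the singular locus of $\Delta_n$ (configurations with a triple point or with two coincident pairs) has codimension $\geq 2$ \emph{in $B^n$}. What is immediate is that this bad locus $Z$ has codimension $4$ in $X^n$; what your argument needs is a bound on $\dim g^{-1}(Z)$, and for the blow-up of a singular center this is not formal: $\mathrm{Bl}_{\mf{a}} M$ can perfectly well acquire an irreducible exceptional component lying over a deep stratum with large fibers (already for unions of linear subspaces this can happen), and if such a component existed here the discrepancy $K_{B^n} - g^*K_{X^n} - E$ could be supported on it, so your ``agree in codimension one, hence linearly equivalent'' step would collapse. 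The codimension bound in $B^n$ is thus precisely the geometric input, and it is asserted rather than proved.

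The gap can be filled, but only by invoking the nontrivial structure of $B^n$: by Haiman's theorem $B^n$ is the reduced fiber product $X^{[n]} \times_{S^nX} X^n$, so the fiber of $g$ over a point of the stratum $X^n_\lambda$ is a product of punctual Hilbert schemes of dimension $n - l(\lambda)$; hence $\dim g^{-1}(X^n_\lambda) = 2l(\lambda) + (n - l(\lambda)) = n + l(\lambda) \leq 2n-2$ exactly on the bad strata, where $l(\lambda) \leq n-2$. With that in hand, your argument (the smooth codimension-$2$ blow-up formula over the open stratum, $K_{B^n}$ Cartier by Gorensteinness, and $\Cl(B^n) \simeq \Cl(U)$ for $U$ open with complement of codimension $\geq 2$ in the normal variety $B^n$) does establish $K_{B^n} = g^*K_{X^n} + E$. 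Alternatively --- and this is in effect what the paper does --- one can simply quote Haiman's identification of the dualizing sheaf of $B^n$, which gives the identity directly; either way the dependence on Haiman should be made explicit rather than hidden in the unproved codimension claim.
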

\begin{remark}\label{rmk: lctlogcanonical}It is well known \cite[Example 9.3.16]{LazarsfeldPAGII} that a pair $(M, \mf{a})$ 
has log-canonical singularities 
if and only if $\lct(M, \mf{a} ) \geq 1$. On the other hand, if $M$ is Gorenstein, then the discrepancies $a_i$ in definition \ref{dfn: cansingularities} are necessarily integers; consequently the pair $(M, \mf{a})$ is canonical if and only if $\lct( M, \mf{a}) >1$, that is, if and only if $\mc{J}(M, \mf{a}) = \FS_M$. 
Hence we have that the isospectral Hilbert scheme $B^n$ has canonical singularities if and only if $\lct(X^n, \mc{I}_{\Delta_n}) >1$ or, equivalently, if $\mc{J}(X^n, \mc{I}_{\Delta_n})$ is trivial; the singularities of $B^n$ are log-canonical if and only if $\lct(X^n, \mc{I}_{\Delta_n}) \geq 1$. 
\end{remark}
\begin{remark}\label{rmk: Cn}The log-canonical threshold $\lct_x(M, \mf{a})$ at the point $x \in M$ coincides with the \emph{complex singularity exponent $c_x(\mf{a})$}  of $\mf{a}$ at the point $x$ \cite{DemaillyKollar2001}, which is an holomorphic invariant. 
As a consequence, the log-canonical threshold  of the pair $(X^n, \mc{I}_{\Delta_n})$ for an arbitrary smooth algebraic surface $X$ is equal to the log-canonical threshold of the pair $((\mbb{C}^2)^n, \mc{I}_{\Delta_n})$. 
\end{remark}
\begin{remark}[Generators of $\mc{I}_{\Delta_n}$ for $X = \mbb{C}^2$]\label{rmk: generators} In \cite{Haiman2001} Haiman finds an explicit set of generators for ideal of
the big diagonal $\Delta_n$ of $(\mbb{C}^2)^n$. Write $(\mbb{C}^2)^n$ as $\Spec \mbb{C}[x_1, y_1, \dots, x_n, y_n]$. If $\bar{p}, \bar{q} \in \mbb{N}^n$, denote with $\Delta(\bar{p}, \bar{q}, \bar{x}, \bar{y})$ the $\perm_n$-anti-invariant regular function 
$$ \Delta(\bar{p}, \bar{q}, \bar{x}, \bar{y}) := \det (x_i^{p_j} y_i^{q_j})_{ij} $$in the variables $x_1, \dots, x_n, y_1, \dots, y_n$. 
If there is no risk of confusion, we will drop the indication of the variables and we will just write it as $\Delta(\bar{p}, \bar{q})$. Haiman proves that homogeneous polynomials of the form 
$\Delta(\bar{p}, \bar{q})$ generate the ideal $\mc{I}_{\Delta_n}$. Of course the function $\Delta(\bar{p}, \bar{q})$ is non identically zero if 
and only if the points $(p_i, q_i) \in \mbb{N} \times \mbb{N}$ are all distinct. 
\end{remark}
\begin{remark}[Generators of minimal degree in $\mc{I}_{\Delta_n}$] \label{rmk: mingenerators}A nonzero homogeneous polynomial of the form $\Delta(\bar{p}, \bar{q})$ 
is of minimal degree if the set of points $\{ (p_i, q_i), i = 1, \dots, n \}$ minimize the weight $\sum_i (p_i + q_i)$. Now for any $n \in \mbb{N}$  there exist two  natural numbers $k$ and $h$, with $h < k$, uniquely determined by $n$, such that $ n = k(k+1)/2 + h$. The integers $k$ and $h$ explain how to arrange $n$ distinct points $(p_i, q_i)$
in $\mbb{N} \times \mbb{N}$ in such a way that the weight $\sum_i (p_i+ q_i)$ is the minimum possible: fill in the first antidiagonals in $\mbb{N} \times \mbb{N}$, of weight
$0$ to $k-1$, 
with  $k(k+1)/2$ points of nonnegative integral coordinates and on the  antidiagonal of weight $k$  put, in an arbitrary way, $h$ points. Consequently, a generator of minimal degree has degree 
$$ d_n = \sum_{i=0}^{k-1} i(i+1) + hk = \frac{1}{3}k(k^2  +3h-1) \;.$$
\end{remark}
\begin{remark}\label{rmk: changevar} Consider the diagonal $\Delta_n$ inside $(\mbb{C}^2)^n = \Spec \mbb{C}[x_1, y_1, \dots, x_n, y_n]$ and consider its ideal $\mc{I}_{\Delta_n} \subseteq \mbb{C}[x_1, y_1, \dots, x_n, y_n]$. We build now a new coordinate system, in the following way. 
Consider the vector space $ (\mbb{C}^2)^{n-1}$ with coordinates $(z_1, w_1, \dots, z_{n-1}, w_{n-1})$ and   $\mbb{C}^2$ with coordinates $(\alpha, \beta)$. 
Consider now the isomorphism
\begin{equation}\label{eq: change} \phi: (\mbb{C}^2)^n \rTo (\mbb{C}^2)^{n-1} \times  \mbb{C}^2 
\end{equation}defined by the coordinate change 
\begin{align*} 
&z_i = x_1 -x_{i+1} \;, \quad  & w_i & = y_1 - y_{i+1} \hspace{1cm} &  \mbox{for $i = 1, \dots n -1$} \\
& \alpha = \sum_{i=1}^n x_i \;, \quad & \beta & = \sum_{i=1}^n y_i \;.\hspace{1cm} & 
\end{align*}In the new coordinates the pairwise diagonals in $(\mbb{C}^2)^n$ are now given by ideals $(z_i, w_i)$ and $(z_i -z_j, w_i -w_j)$, $1 \leq i <j \leq n-1$
and
the ideal $\mc{I}_{\Delta_n}$ is the intersection $$ \mc{I}_{\Delta_n} = \cap_{i=1}^{n-1}(z_i, w_i) \bigcap \cap_{1 \leq i < j \leq n-1} (z_i - z_j, w_i -w_j)$$inside $\mbb{C}[z_1, w_1, \dots, z_{n-1}, w_{n-1}, \alpha, \beta]$.  
Since the generators of $\mc{I}_{\Delta_n}$ are just polynomials in the $z_i, w_i$, the ideal $\mc{I}_{\Delta_n}$ is the extension of an ideal $\mc{I}_{\widetilde{D}_{n-1}} \subseteq \mbb{C}[z_1, \dots, z_{n-1}, w_1, \dots, w_{n-1}] $, generated by the same elements. 
In other words, we can write 
\begin{equation}\label{eq: identification}
\mc{I}_{\Delta_n} \simeq \phi^* (\mc{I}_{\widetilde{D}_{n-1}}  \boxtimes \FS_{\mbb{C}^2}) \;.
\end{equation}\sloppy
Consider now the projection 
$r: (\mbb{C}^2)^{n-1} \times \mbb{C}^2 \rTo (\mbb{C}^2)^{n-1}$.  
Under the identification $\phi$, 
the small diagonal 
$\Delta_{1, \dots, n}$ in $(\mbb{C}^2)^n$ is the pre-image $r^{-1}( \{ 0 \})$ by $r$ of the origin $\{ 0 \}$ in $(\mbb{C}^2)^{n-1}$. 
Consequently, the order of the big diagonal 
$\Delta_n$ along the small diagonal $\Delta_{1, \dots, n}$ coincide with the order of $\widetilde{D}_{n-1}$ at the origin: $\ord_{\Delta_{1, \dots, n}} \mc{I}_{\Delta_n} = \ord_0 \mc{I}_{\widetilde{D}_{n-1}}$; but $\ord_0 \mc{I}_{\widetilde{D}_{n-1}}$ is the minimal degree of generators of $\mc{I}_{\widetilde{D}_{n-1}}$. But $\mc{I}_{\Delta_n}$ and $\mc{I}_{\widetilde{D}_{n-1}}$ have the same generators, hence $\ord_{\Delta_{1, \dots , n }} \mc{I}_{\Delta_n} = d_n$. Since the order of a coherent ideal along a subvariety is an holomorphic invariant, we can say in general that, for a smooth algebraic surface~$X$, $$ \ord_{\Delta_{1, \dots, n}} \mc{I}_{\Delta_n} = d_n \;.$$
\end{remark}

\begin{remark}Consider $X = \mbb{C}^2$. Note that, if $\{ (p_i, q_i), i =1, \dots, n-1 \}$ is a set of $n-1$ distinct points in $\mbb{N} \times \mbb{N}$ not containing the origin, the polynomial $\Delta(\bar{p}, \bar{q}, \bar{z}, \bar{w})$ belongs to $\mc{I}_{\widetilde{D}_{n-1}}$. 
\end{remark}

\subsection{$F$-pure thresholds}
For computational convenience we consider the characteristic $p$ analogue of the log-canonical threshold \cite{TakagiWatanabe2004, MTW2005}. Let $k$ be a perfect field of characteristic $p$;  let $R$ be a finitely generated regular $k$-algebra and  $\mf{a} \subseteq R$ a nonzero ideal; 
consider $M= \Spec R$ and let $x \in V(\mf{a})$ be a closed point corresponding to a maximal ideal $\mf{m}_x$. For $e \in \mbb{N}^*$, define
$$ \nu_{\mf{a}}(e) := \max \left \{ i \in \mbb{N} \; | \; \mf{a}^i \not \subseteq \mf{m}_x^{[p^e]} \right \} $$where $\mf{m}_x^{[p^e]}$ is the ideal generated by $p^e$-powers of generators of $\mf{m}_x$. The inequality 
$\nu_{\mf{a}}(e+1) \geq p \nu_{\mf{a}}(e)$ implies that the sequences $\nu_{\mf{a}}(e)/p^e$ and $\nu_{\mf{a}}(e)/(p^e -1)$ are nondecreasing \cite[Lemma 1.1]{MTW2005}. The
$F$-pure threshold of the ideal $\mf{a}$ at the point $x$ is defined as 
\begin{equation}\label{eq: deffpt} \fpt_{x}(M, \mf{a}):= \lim_{e \ra + \infty} \frac{\nu_{\mf{a}}(e)}{p^e} = \lim_{e \ra + \infty} \frac{\nu_{\mf{a}}(e)}{(p^e-1)} = \sup_{e \in \mbb{N}^*} \frac{\nu_{\mf{a}}(e)}{p^e}   = \sup_{e \in \mbb{N}^*} \frac{\nu_{\mf{a}}(e)}{(p^e-1)} \;.\end{equation}
Suppose now that $\mf{a}$ is principal: we write simply $\nu_f(e)$ instead of $\nu_{(f)}(e)$ and $\fpt_{x}(M, f)$ instead of $\fpt_{x}(M, (f))$. 
In this case the sequence 
$\nu_{\mf{a}}(e)/p^e$ is bounded above by $1$.  
Hence, for any $e \in \mbb{N}^*$ we have the inequalities
\begin{equation}\label{eq: fptinequality} \frac{\nu_{f}(e)}{(p^e-1)} \leq \fpt_x(M,f) 
\leq 1 \;.\end{equation}

Suppose now that $M$ is the affine space $\mbb{A}^n_{\mbb{Z}}$ over $\mbb{Z}$ and $\mf{a}$ is a nonzero ideal of $R := \mbb{Z}[x_1, \dots, x_n]$. For any prime $p$ consider the mod $p$ reduction $M_p := \Spec(R \tens_{\mbb{Z}} \mbb{F}_p)$ and 
$\mf{a}_p = \mf{a} \cdot \mbb{F}_p[x_1, \dots, x_n]$. On the other hand, if $\mbb{K}$ is an arbitrary field extension of $\mbb{Q
}$ we can consider the extensions $\mf{a}_{\mbb{K}}$
inside $\mbb{K}[x_1, \dots, x_n]$, respectively  and $M_{\mbb{K}} : = \Spec(R \tens_{\mbb{Z}} \mbb{K})$. For varieties defined over arbitrary perfect fields, Zhu recently proved an interpretation of the log-canonical threshold in terms of dimensions of jet-schemes \cite[Theorem B]{Zhu2013arxiv};  this result yields, as a consequence, the inequality $\fpt_x(M_p, \mf{a}_p) \leq \lct_x(M_{\mbb{Q}}, \mf{a}_{\mbb{Q}}) $
 for every prime $p$ and for every closed point $x \in V(\mf{a})$ \cite[Corollary 4.2]{Zhu2013arxiv}. Since the dimension of a scheme does not change upon extension of the field of definition \cite[Corollaire 4.1.4]{EGAIV2}, we have, for every prime $p$ and any closed point $x \in V(\mf{a})$ \begin{equation}\label{eq: fptlct} \fpt_x(M_p, \mf{a}_p) \leq \lct_x(M_{\mbb{C}}, \mf{a}_{\mbb{C}}) \;.\end{equation}

\subsection{Singularities of the isospectral Hilbert scheme}

We begin by establishing the following upper bound for the log-canonical threshold of the pair $(X^n, \mc{I}_{\Delta_n})$. 
\begin{pps}\label{pps: lctinequality}The log-canonical threshold of $(X^n, \mc{I}_{\Delta_n})$ is bounded above by $(2n-2)/d_n$: 
$$ \lct(X^n, \mc{I}_{\Delta_n}) \leq \frac{2n-2}{d_n} \;.$$
\end{pps}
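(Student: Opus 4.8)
The plan is to bound the log-canonical threshold from above by exhibiting a divisorial valuation with a small log-discrepancy. The natural candidate is the order of vanishing along (the exceptional divisor over) the small diagonal $\Delta_{1,\dots,n}$ of $X^n$. Recall the standard inequality: if $E$ is a prime divisor on a log-resolution $f: Y \rTo M$ with $\ord_E(\mf{a}) = b$ and $\ord_E(K_{Y/M}) = a$, then $\lct(M, \mf{a}) \leq (a+1)/b$. So the whole point is to compute, or at least bound, these two numbers for the valuation $\ord_{\Delta_{1,\dots,n}}$.

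First I would identify the relevant numbers. By remark~\ref{rmk: changevar}, we have $\ord_{\Delta_{1,\dots,n}} \mc{I}_{\Delta_n} = d_n$, so $b = d_n$. For the discrepancy, the small diagonal $\Delta_{1,\dots,n} \subseteq X^n$ is a smooth subvariety of codimension $2(n-1)$ (it is isomorphic to $X$, sitting inside $X^n$ of dimension $2n$). Blowing up a smooth center of codimension $c$ in a smooth variety produces an exceptional divisor with discrepancy $c-1$; hence the valuation $v = \ord_{\Delta_{1,\dots,n}}$ obtained from this blow-up has $K_{Y/M}$-coefficient $a = 2(n-1) - 1 = 2n-3$. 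Plugging in gives $\lct(X^n, \mc{I}_{\Delta_n}) \leq (a+1)/b = (2n-2)/d_n$, which is exactly the claimed bound. By remark~\ref{rmk: Cn} it suffices to do this computation for $X = \mbb{C}^2$, which is the setting of remark~\ref{rmk: changevar}, so no extra care about the surface is needed.

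Concretely, I would work in the coordinates of remark~\ref{rmk: changevar}: after the isomorphism $\phi$, the ideal $\mc{I}_{\Delta_n}$ becomes $\phi^*(\mc{I}_{\widetilde{D}_{n-1}} \boxtimes \FS_{\mbb{C}^2})$, the small diagonal becomes $r^{-1}(0)$, and everything reduces to the origin $0 \in (\mbb{C}^2)^{n-1}$. There $\mc{I}_{\widetilde{D}_{n-1}}$ has order $d_n$ at the origin, by remark~\ref{rmk: mingenerators}. Blowing up the origin of $(\mbb{C}^2)^{n-1} = \mbb{C}^{2(n-1)}$ yields an exceptional $\mbb{P}^{2n-3}$ with discrepancy $2(n-1)-1 = 2n-3$ and with $\mc{I}_{\widetilde{D}_{n-1}}$ pulled back to order $d_n$ along it; the product with the $(\alpha,\beta)$-factor and the change of coordinates $\phi$ don't affect either number. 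Then a log-resolution dominating this blow-up contains a divisor $E$ with $(\ord_E(K_{Y/M}), \ord_E(F)) = (2n-3, d_n)$, giving the multiplier-ideal/log-canonical-threshold inequality $\lct \leq (2n-3+1)/d_n$.

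The main (and really only) obstacle is making the discrepancy/order bookkeeping airtight: one must be sure that passing to a log-resolution $Y$ that factors through the blow-up $\Bl_0$ does not change the comparison — i.e. that the divisor $E$ on $Y$ lying over the exceptional $\mbb{P}^{2n-3}$ still satisfies $a_E = 2n-3$ and $\ord_E(F) = d_n$, and that the product structure with $\mbb{C}^2_{\alpha,\beta}$ genuinely contributes nothing. This is routine valuation theory, but it is the step where an explicit argument (or a citation to the standard inequality $\lct(M,\mf a) \le (\ord_E K_{Y/M}+1)/\ord_E \mf a$, valid for any divisor $E$ over $M$) is needed. Everything else — the identification of codimension $2(n-1)$ and of $\ord_0 = d_n$ — is already supplied by remark~\ref{rmk: changevar} and remark~\ref{rmk: mingenerators}.
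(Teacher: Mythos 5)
Your proposal is correct and takes essentially the same route as the paper: both arguments rest on the two facts that $\ord_{\Delta_{1,\dots,n}}\mc{I}_{\Delta_n}=d_n$ (remark \ref{rmk: changevar}) and that the small diagonal has codimension $2n-2$, reducing to $X=\mbb{C}^2$ via remark \ref{rmk: Cn}. The only difference is cosmetic: the paper concludes with the multiplier-ideal containment criterion of \cite[Example 9.3.7]{LazarsfeldPAGII}, namely $\mc{J}(c\cdot\mc{I}_{\Delta_n})\subseteq\mc{I}_{\Delta_{1,\dots,n}}$ once $c\,d_n\geq 2n-2$, whereas you invoke the equivalent divisorial bound $\lct(M,\mf{a})\leq (\ord_E K_{Y/M}+1)/\ord_E\mf{a}$ for the exceptional divisor of the blow-up of the small diagonal --- the same computation in a different standard formulation.
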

\begin{proof}By remark \ref{rmk: Cn} it is sufficient to prove the inequality  when $X = \mbb{C}^2$. 
By remark \ref{rmk: changevar}, for $c \in \mbb{Q}$, $c >0$, 
the order of $c \cdot \mc{I}_{\Delta_n}$ along the small  diagonal $\Delta_{1, \dots, n}$ is $c d_n$; as soon as 
 $c d_n  \geq \codim_{X}\Delta_{1, \dots, n} + 1-1 = 2n-2$, that is, if $c \geq (2n-2)/d_n$, by  \cite[Example 9.3.7]{LazarsfeldPAGII} we have that $\mc{J}(X, c \cdot \mc{I}_{\Delta_n}) \subseteq \mc{I}_{\Delta_{1, \dots, n}}$. By definition of log-canonical threshold $\lct_0(X^n, \mc{I}_{\Delta_n})$ as infimum, we get the desired inequality $\lct(X^n, \mc{I}_{\Delta_n}) \leq \lct_0(X^n, \mc{I}_{\Delta_n}) \leq (2n-2)/d_n$. \end{proof}

\begin{remark}Consider the symmetric variety $S^nX$, where $X$ is a smooth complex algebraic surface; 
we will indicate with $\pi: X^n \rTo S^nX$ the quotient projection. 
It is well known 
that $S^nX$ admits a stratification in strata $S^n_\lambda X$, where $\lambda$ is a partition of $n$. 
The stratum $S^n_\lambda X$ is the locally closed subset of $0$-cycles of the form $\sum_{i=1}^{l(\lambda)} \lambda_i x_i$, where 
$l(\lambda)$ is the length of the partition $\lambda$ and $x_i$ are $l(\lambda)$ distinct points in $X$. By means of this stratification of $S^n X$ we can define a stratification of $X^n$ setting the stratum $X^n_\lambda$ as the locally closed subset
$\pi^{-1}(S^n_\lambda X)$. It is clear that if $x \in X^n_\lambda$ then a sufficiently small open set $V_1$ of $x$ in $X^n$ in the standard topology is biholomorphic to a sufficiently small open set $V_2$ of the origin in $(\mbb{C}^2)^n$ of the form $V_2 = U_1^{\lambda_1} \times \cdots \times U_{l(\lambda)}^{\lambda_{l(\lambda)}}$, where $U_i$ are adequate small open sets of the origin in $\mbb{C}^2$, 
such that, via the biholomorphic map,  the ideal $\mc{I}_{\Delta_n}$ over $V_1$ is sent to $\mc{I}_{\Delta_{\lambda_1}} \boxtimes \cdots \boxtimes \mc{I}_{\Delta_{\lambda_{l(\lambda)}}}$ over $V_2$.  Therefore, if $x \in X^n_\lambda$, we have, by proposition   \ref{pps: lctinequality} and by \cite[Proposition 9.5.22]{LazarsfeldPAGII} that 
\begin{equation} \label{eq: lctmin} \lct_x( X^n, \mc{I}_{\Delta_n}) = \min \big \{ \lct_0((\mbb{C}^2)^{\lambda_i}, 
\mc{I}_{\Delta_{\lambda_i}} )  \; | \; i = 1, \dots, l(\lambda) \big \} \leq \frac{2 \lambda_1 -2}{d_{\lambda_1} }\;.
\end{equation}
\end{remark}
We now make the following conjecture 
\begin{conj}\label{conj1}Let $X$ be a smooth algebraic surface. If a point $x$ of $X^n$ lies in the stratum $X^n_{\lambda}$, where $\lambda$ is a partition of $n$, then 
$ \lct_x(X^n, \mc{I}_{\Delta_n}) =  (2 \lambda_1 -2)/d_{\lambda_1} $. Therefore $$ \displaystyle \lct(X^n, \mc{I}_{\Delta_n}) = \frac{2n-2}{d_n} \;.$$
\end{conj}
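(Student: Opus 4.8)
The equality $\lct(X^n,\mc I_{\Delta_n})=(2n-2)/d_n$ is the special case $\lambda=(n)$ of the first assertion, provided one knows that the small diagonal gives the least local threshold --- which is point (b) below; so only the pointwise statement is really at issue. By \eqref{eq: lctmin} we already have
$$\lct_x(X^n,\mc I_{\Delta_n})=\min\bigl\{\lct_0((\mbb C^2)^{\lambda_i},\mc I_{\Delta_{\lambda_i}})\mid i=1,\dots,l(\lambda)\bigr\}\leq\frac{2\lambda_1-2}{d_{\lambda_1}},$$
so it suffices to prove the reverse inequality. Discarding the parts of $\lambda$ equal to $1$ (for which $\mc I_{\Delta_1}=\FS$), this will follow from the two statements: \emph{(a)} $\lct_0((\mbb C^2)^m,\mc I_{\Delta_m})=(2m-2)/d_m$ for every $m\geq 2$; and \emph{(b)} the sequence $m\mapsto (2m-2)/d_m$ is non-increasing. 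Indeed, (a) and (b) give $\lct_0((\mbb C^2)^{\lambda_i},\mc I_{\Delta_{\lambda_i}})=(2\lambda_i-2)/d_{\lambda_i}\geq (2\lambda_1-2)/d_{\lambda_1}$ for each $\lambda_i\leq\lambda_1$, so the minimum above equals $(2\lambda_1-2)/d_{\lambda_1}$.

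Statement (b) is elementary. Writing $m=k(k+1)/2+h$ and $d_m=\tfrac13 k(k^2+3h-1)$ as in Remark~\ref{rmk: mingenerators}, the step $m\mapsto m+1$ raises $2m-2$ by $2$ and $d_m$ by $k$, while $k(m-1)-d_m=\tfrac{k}{6}(k+4)(k-1)$; hence $d_m<k(m-1)$ for $k\geq 2$, which is immediately seen to be equivalent to $(2m-2)/d_m>(2m)/d_{m+1}$, so $m\mapsto (2m-2)/d_m$ strictly decreases for $m\geq 3$, and $(2\cdot2-2)/d_2=(2\cdot3-2)/d_3=2$.

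For (a) the inequality "$\leq$" is given by the proof of Proposition~\ref{pps: lctinequality}. By the coordinate change $\phi$ of Remark~\ref{rmk: changevar} and \cite[Proposition 9.5.22]{LazarsfeldPAGII} (the extra $\mbb C^2$-factor carries the trivial ideal, hence is irrelevant to the threshold), $\lct_0((\mbb C^2)^m,\mc I_{\Delta_m})=\lct_0(\mbb C^{2m-2},\mc I_{\widetilde D_{m-1}})$, where $\mc I_{\widetilde D_{m-1}}$ is a \emph{homogeneous} ideal with $\ord_0\mc I_{\widetilde D_{m-1}}=d_m$; so (a) asserts exactly that the order valuation at the origin computes $\lct_0(\mbb C^{2m-2},\mc I_{\widetilde D_{m-1}})$, equivalently that $\mc J\bigl(c\cdot\mc I_{\widetilde D_{m-1}}\bigr)_0=\FS_0$ for every $c<(2m-2)/d_m$. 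I would pursue this lower bound along two complementary lines. The \emph{arithmetic} one: by \eqref{eq: fptlct} it is enough to prove $\fpt_0\bigl(\mbb C^{2m-2}_p,\mc I_{\widetilde D_{m-1},p}\bigr)\geq (2m-2)/d_m$ for infinitely many primes $p$, since then $\fpt_0\leq\lct_0\leq (2m-2)/d_m$ forces equality; concretely one must use Haiman's anti-invariant generators $\Delta(\bar p,\bar q,\bar z,\bar w)$ to exhibit, for each $e$, a product of $\simeq (2m-2)p^e/d_m$ such generators (and of their relabellings and $z\leftrightarrow w$ reflections) whose expansion contains a sufficiently \emph{balanced} monomial $\prod_i z_i^{a_i}w_i^{b_i}$ with all $a_i,b_i<p^e$ and nonzero coefficient mod $p$, so that the product escapes $\mf m_0^{[p^e]}$. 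The \emph{geometric} one: construct, by induction on $m$, an explicit log-resolution of $(\mbb C^{2m-2},\mc I_{\widetilde D_{m-1}})$ --- for instance by blowing up the closed strata of the small-diagonal stratification in order of increasing dimension, in the spirit of the Fulton--MacPherson and Ulyanov compactifications --- and verify that every exceptional discrepancy meets the bound $(2m-2)/d_m$; this is carried out for $m=3$ in the final section of this paper.

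The chief obstacle, common to both lines, is the appearance of \emph{embedded components}: pulled back to a natural partial resolution (say the blow-up of the deepest stratum), $\mc I_{\widetilde D_{m-1}}$ acquires embedded primary components along the exceptional locus --- the very difficulty isolated by Teitler \cite{Teitler2007} for configurations of lines --- so that the obvious blow-ups fail to be log-resolutions and the discrepancies elude an a priori estimate. On the arithmetic side the same phenomenon reappears as the delicate cancellations in products of the alternating generators $\Delta(\bar p,\bar q)$, and since $\fpt_0<\lct_0$ is to be expected for small $p$, an asymptotic analysis in $p$ --- or a very sharp choice of generators --- seems unavoidable. I expect that a complete proof will have to rely on Haiman's precise description of the powers $\mc I_{\Delta_n}^{\,i}$ (polygraphs), which should control both the divisorial valuations that matter in characteristic $0$ and the monomials that survive in characteristic $p$.
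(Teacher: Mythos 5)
This statement is Conjecture~\ref{conj1}: the paper does not prove it, and neither does your proposal. What you have done correctly is to repackage the conjecture: the reduction of the pointwise statement to the single assertion at the origin is already contained in \eqref{eq: lctmin} (which the paper derives from \cite[Proposition 9.5.22]{LazarsfeldPAGII}), and your monotonicity claim (b) is a correct elementary computation (indeed $k(m-1)-d_m=\tfrac{k}{6}(k+4)(k-1)\geq 0$, with equality only for $k=1$, so $(2m-2)/d_m$ is non-increasing). But your statement (a), the lower bound $\lct_0(\mbb C^{2m-2},\mc I_{\widetilde D_{m-1}})\geq (2m-2)/d_m$, is precisely the open content of the conjecture; the paper itself only establishes the opposite inequality (Proposition~\ref{pps: lctinequality}) and the partial results of Theorem~\ref{thm: logcanonical}. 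Your two ``lines'' are strategy sketches, not arguments: you state what would have to be exhibited (balanced monomials surviving modulo $\mf m_0^{[p^e]}$, or a log-resolution with controlled discrepancies) without producing either, and you yourself identify the obstruction (embedded components, cancellations among the alternants $\Delta(\bar p,\bar q)$) that the paper's introduction flags as the hard open problem beyond the Fulton--MacPherson compactification. The geometric line is carried out in the paper only for $n=3$ (Section 3), which is why the conjecture remains a conjecture.

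One more specific caveat on the arithmetic line: \eqref{eq: fptlct} gives $\fpt_0\leq\lct_0$, and for a \emph{fixed} prime the $F$-pure threshold is in general strictly smaller than the log-canonical threshold (equality is only expected for infinitely many $p$, and is only known asymptotically as $p\to\infty$). Hence single-prime, single-$e$ certificates of the type used in the proof of Theorem~\ref{thm: logcanonical} can certify inequalities such as $\lct>1$ or $\lct\geq 1$, but cannot by themselves pin down the exact value $(2m-2)/d_m$; you would need uniform lower bounds for $\nu_{\mf a}(e)$ valid for all $e$ (or for a sequence of primes with control on the error), i.e.\ the asymptotic analysis you mention but do not carry out. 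In short: the reduction and the arithmetic of $d_n$ are fine and consistent with the paper, but the essential step is missing, so this is a research plan rather than a proof.
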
This conjecture would immediately imply the following fact about the singularities of the isospectral Hilbert scheme $B^n$. 
\begin{conj}\label{conj: seconda}The singularities of the isospectral Hilbert scheme $B^n$ are canonical if and only if $n \leq 7$, log-canonical if $n \leq 8$, 
not log-canonical if $n \geq 9$. 
\end{conj}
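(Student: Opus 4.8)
\medskip
\noindent\emph{Proof strategy (conditional on Conjecture~\ref{conj1}).}
The plan is to derive the statement from Conjecture \ref{conj1} together with the dictionary already in place between the singularities of $B^n$ and the log-canonical threshold of the pair $(X^n, \mc{I}_{\Delta_n})$. By Corollary \ref{crl: isospectral} and Remark \ref{rmk: lctlogcanonical}, the isospectral Hilbert scheme $B^n$ has canonical singularities if and only if $\lct(X^n, \mc{I}_{\Delta_n}) > 1$, and log-canonical singularities if and only if $\lct(X^n, \mc{I}_{\Delta_n}) \geq 1$; the strict inequality in the first equivalence is precisely what the Gorenstein property of $B^n$ provides, since it forces the discrepancies in Definition \ref{dfn: cansingularities} to be integers. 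Granting Conjecture \ref{conj1}, i.e. $\lct(X^n, \mc{I}_{\Delta_n}) = (2n-2)/d_n$, the whole statement reduces to comparing the two integers $2n-2$ and $d_n$, where $d_n$ is as in Remark \ref{rmk: mingenerators}.

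Concretely, I would write $n = k(k+1)/2 + h$ with $0 \leq h \leq k$ as in Remark \ref{rmk: mingenerators}, plug in the closed form $d_n = \tfrac{1}{3}k(k^2 + 3h - 1)$, and clear denominators. A direct computation gives
$$ 3\,\big(d_n - (2n-2)\big) \;=\; k^3 - 3k^2 - 4k + 6 + 3h(k-2)\,, $$
so the sign of $d_n - (2n-2)$ is that of the right-hand side. I would then split into two regimes. For $k \leq 3$, equivalently $n \leq 9$, there are only the finitely many pairs $(k,h) \in \{(1,0),(1,1),(2,0),(2,1),(2,2),(3,0),(3,1),(3,2),(3,3)\}$, i.e. $n \in \{1,\dots,9\}$, and one checks directly that the right-hand side is negative for $2 \leq n \leq 7$, equals $0$ for $n = 8$, and is positive for $n = 9$. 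For $k \geq 4$, equivalently $n \geq 10$, the cubic $P(k) := k^3 - 3k^2 - 4k + 6$ has $P(4) = 6 > 0$ and derivative $P'(k) = 3k^2 - 6k - 4$, which is positive for $k \geq 3$; hence $P$ is increasing and strictly positive on $[4,\infty)$, and since $3h(k-2) \geq 0$ as well, the right-hand side is strictly positive there.

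Assembling the two regimes, $d_n < 2n-2$ exactly for $2 \leq n \leq 7$, $d_n = 2n-2$ exactly for $n = 8$, and $d_n > 2n-2$ exactly for $n \geq 9$. Reading this back through $\lct(X^n, \mc{I}_{\Delta_n}) = (2n-2)/d_n$ and the two equivalences of the first paragraph yields: $B^n$ is canonical if and only if $n \leq 7$; for $n = 8$ it is log-canonical but not canonical, so $B^n$ is log-canonical for $n \leq 8$; and for $n \geq 9$ one has $\lct(X^n, \mc{I}_{\Delta_n}) < 1$, so $B^n$ is not log-canonical. This is exactly the assertion.

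The only genuine obstacle is Conjecture \ref{conj1} itself. Observe that Proposition \ref{pps: lctinequality} already supplies the upper bound $\lct(X^n, \mc{I}_{\Delta_n}) \leq (2n-2)/d_n$, which, combined with the arithmetic above, unconditionally establishes the last clause (not log-canonical for $n \geq 9$); it is the matching lower bound on $\lct$ — i.e. the reverse inequality in Conjecture \ref{conj1} — that is required for the sharp thresholds in the first two clauses, and this is the part that remains open. Everything else above is a finite, elementary verification.
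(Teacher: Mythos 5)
Your derivation is correct and is essentially the argument the paper intends: the statement is presented there as an immediate consequence of Conjecture~\ref{conj1}, obtained exactly as you do by combining Corollary~\ref{crl: isospectral} and Remark~\ref{rmk: lctlogcanonical} with the comparison of $2n-2$ against $d_n$ (your closed-form check, giving $d_n<2n-2$ for $2\le n\le 7$, $d_8=14=2\cdot 8-2$, and $d_n>2n-2$ for $n\ge 9$, is right). You also correctly observe, as the paper does in Theorem~\ref{thm: logcanonical}, that only the clause for $n\ge 9$ follows unconditionally from Proposition~\ref{pps: lctinequality}, while the sharp thresholds at $n=7,8$ remain conditional on Conjecture~\ref{conj1}.
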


We are able to partially prove conjecture \ref{conj: seconda}. 

\begin{theorem}\label{thm: logcanonical}The singularities of the isospectral Hilbert scheme $B^n$ are canonical if $n \leq 5$, log-canonical if $n \leq 7$. 
For $n \geq 9$ they are not log-canonical. 
\end{theorem}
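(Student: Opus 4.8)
The plan is to split the statement into an upper-bound part (non-log-canonical for $n \geq 9$) and a lower-bound part (canonical for $n \leq 5$, log-canonical for $n \leq 7$), using Corollary~\ref{crl: isospectral} and Remark~\ref{rmk: lctlogcanonical} throughout to translate everything into statements about $\lct(X^n, \mc{I}_{\Delta_n})$, and Remark~\ref{rmk: Cn} to reduce to $X = \mbb{C}^2$. For the non-log-canonical half, I would simply feed $n \geq 9$ into Proposition~\ref{pps: lctinequality}: one checks via the formula $d_n = \tfrac13 k(k^2 + 3h - 1)$ from Remark~\ref{rmk: mingenerators} that $(2n-2)/d_n < 1$ for every $n \geq 9$ (the ratio is decreasing in $n$ in the relevant range, so it suffices to verify $n = 9$ and, to be safe, a couple of small cases like $n = 10, 11$ where $k$ jumps), and then $\lct(X^n, \mc{I}_{\Delta_n}) < 1$ means the pair — hence $B^n$ — is not log-canonical.

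\textbf{The lower bounds.} For $n \leq 7$ the real work is producing a lower bound $\lct(X^n, \mc{I}_{\Delta_n}) \geq 1$ (and $> 1$ for $n \leq 5$). By the stratification remark (equation~\eqref{eq: lctmin}), it is enough to bound $\lct_0((\mbb{C}^2)^m, \mc{I}_{\Delta_m})$ from below for every $m \leq n$, since the local log-canonical threshold at any point is the minimum of the thresholds attached to the parts of its partition; so for $n \leq 7$ I need the bound for $m = 2, \dots, 7$, and for $n \leq 5$ the strict bound for $m = 2, \dots, 5$. Here is where the $F$-pure threshold machinery of Section~2.2 enters: by inequality~\eqref{eq: fptlct}, $\fpt_x((\mbb{C}^2)^m_p, (\mc{I}_{\Delta_m})_p) \leq \lct_x((\mbb{C}^2)^m_{\mbb{C}}, (\mc{I}_{\Delta_m})_{\mbb{C}})$ for every prime $p$, so it suffices to exhibit, for each $m \leq 7$, a single prime $p$ and an exponent $e$ with $\nu_{\mf{a}}(e)/(p^e - 1) \geq 1$ — equivalently $\mf{a}^{p^e - 1} \not\subseteq \mf{m}_0^{[p^e]}$ — where $\mf{a} = (\mc{I}_{\Delta_m})_p$ in the coordinates of Remark~\ref{rmk: changevar} (one may replace $\mc{I}_{\Delta_m}$ by $\mc{I}_{\widetilde D_{m-1}}$ in the $2(m-1)$ variables $z_i, w_i$, which kills the spurious $\alpha, \beta$ directions). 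Concretely I would take explicit products of Haiman's generators $\Delta(\bar p, \bar q, \bar z, \bar w)$ — enough of them, chosen so the total degree is $p^e - 1$ — and check that some monomial appearing in the expansion has every exponent $< p^e$, so the product avoids $\mf{m}_0^{[p^e]}$. For the strict inequality needed when $n \leq 5$, I would instead produce $e, p$ with $\nu_{\mf a}(e)/(p^e-1) > 1$, i.e. find a product of generators of degree $\geq p^e$ whose support still misses $\mf m_0^{[p^e]}$, forcing $\fpt > 1$ and hence $\lct > 1$; since $\lct > 1$ over a Gorenstein variety is equivalent to canonicity by Remark~\ref{rmk: lctlogcanonical}, this finishes the $n \leq 5$ case. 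The value $n = 8$ is deliberately excluded: $(2n-2)/d_8 = 1$, so the upper bound is consistent with log-canonicity but the $F$-pure method would need $\fpt = 1$ exactly, which the chosen finite computation need not detect.

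\textbf{Main obstacle.} The genuinely delicate point is the combinatorial-computational one: for each $m$ up to $7$ (so $2m - 2$ up to $12$ variables), one must actually locate a good prime $p$, a good $e$, and an explicit product of the $\Delta(\bar p, \bar q)$'s whose expansion contains a monomial with all exponents bounded by $p^e$. Picking the right set of generators (their degrees must sum to exactly $p^e - 1$, or to $\geq p^e$ in the canonical cases) and certifying non-vanishing of the relevant coefficient modulo $p$ is not automatic — it is essentially a guided search, and the size of the determinantal generators grows quickly with $m$, so organizing the search (likely by taking $e = 1$ and a moderately large prime, or a small prime with $e = 2$) is the crux. Once a suitable witness product is found for each $m \leq 7$, the rest of the argument is the formal bookkeeping above: reduce to $\mbb{C}^2$ (Remark~\ref{rmk: Cn}), reduce the local threshold over a stratum to the minimum over parts (equation~\eqref{eq: lctmin}), apply \eqref{eq: fptlct}, and invoke Corollary~\ref{crl: isospectral} together with Remark~\ref{rmk: lctlogcanonical} to pass back to the singularities of $B^n$.
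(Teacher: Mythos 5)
Your overall strategy is the same as the paper's: the $n\geq 9$ half is exactly Proposition~\ref{pps: lctinequality} (the paper just notes $(2n-2)/d_n\leq 16/17$ for $n\geq 9$), and the lower bounds are obtained, as in the paper, by reducing to $X=\mbb{C}^2$ (Remark~\ref{rmk: Cn}), passing to $\mc{I}_{\widetilde{D}_{m-1}}$ via \eqref{eq: identification}, using the stratification equality \eqref{eq: lctmin} to reduce to the deepest stratum, and certifying $\fpt\geq 1$ (resp. $>1$) in some characteristic $p$ via \eqref{eq: deffpt}--\eqref{eq: fptlct}. The only structural difference is that the paper handles $n\leq 4$ in characteristic $0$ by Koll\'ar--Bertini (exhibiting a single $g\in\mc{I}_{\widetilde{D}_{n-1}}$ whose divisor has rational singularities), reserving the char-$p$ certificates for $n=5$ (the product $g^2h^5$ of seven generators, $p=7$) and $n=6,7$ (sixth powers $g^6$ of a single generator, $p=7$, using that for a principal ideal $\nu_{g_p}(1)=p-1$ gives $\fpt\geq 1$); your all-char-$p$ plan is a legitimate variant, and you correctly isolate $n=8$ and correctly insist on the non-principal ideal for the strict inequality when $n\leq 5$.

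There is, however, one concrete error in your certificate criterion, and as stated the search it prescribes would fail. You write that the witness should be a product of Haiman generators ``chosen so the total degree is $p^e-1$'' (resp. degree $\geq p^e$ for the canonical cases), i.e.\ you ask the \emph{degrees of the factors to sum} to $p^e-1$. But $\nu_{\mf{a}}(e)\geq p^e-1$ means $\mf{a}^{p^e-1}\not\subseteq\mf{m}_0^{[p^e]}$: what must equal $p^e-1$ (resp. be at least $p^e$) is the \emph{number of ideal elements multiplied}, i.e.\ the power of $\mf{a}$ to which the witness belongs, independently of the polynomial degree of the product. Since the generators of $\mc{I}_{\widetilde{D}_{m-1}}$ have degree $d_m\geq 2$ (e.g.\ degree $8$ for $m=6$), a product whose degrees sum to $p^e-1$ involves only about $(p^e-1)/d_m$ factors and therefore only certifies $\fpt\gtrsim 1/d_m$, nowhere near the needed bound $\geq 1$; for $m=6,7$ and small $p$ one cannot even form such a product. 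The fix is exactly what the paper does: take a product of $p^e-1$ (or $p^e$, for strictness) elements of the ideal --- e.g.\ $g^2h^5$ with seven factors for $m=5$, or $g^{p-1}$ for $m=6,7$ --- and check that some monomial of its expansion with all exponents $\leq p^e-1$ has nonzero coefficient mod $p$ (the degree of such a product is still $\leq 2(m-1)(p^e-1)$, so this is feasible). With that correction, and granting the explicit Macaulay2-type verifications you acknowledge as the remaining work, your argument matches the paper's.
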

\begin{proof}By corollary \ref{crl: isospectral} and by remark \ref{rmk: lctlogcanonical} the singularities of the isospectral Hilbert scheme $B^n$ are log-canonical if and only if $\lct(X^n, \mc{I}_{\Delta_n})~\geq~1$ and canonical if and only if $\lct(X^n, \mc{I}_{\Delta_n})~>~1$. 
For $n \geq 9$, by proposition \ref{pps: lctinequality}, $\lct(X^n, \mc{I}_{\Delta_n}) \leq (2n-2)/d_n \leq 16/17$. Hence they can't be log-canonical.  

Let's now prove the first statement. Using corollary \ref{crl: isospectral} and remark \ref{rmk: lctlogcanonical} it is sufficient to prove that the singularities of the pair  $(X^n, \mc{I}_{\Delta_n})$ are canonical  for $n \leq 5$ and that $\lct(X^n, \mc{I}_{\Delta_n}) \geq 1 $ for $n =6,7$. 
By remark \ref{rmk: Cn} it is sufficient to prove these facts for $X = \mbb{C}^2$.  
By (\ref{eq: identification}), it is then sufficient to prove that the pair $(\mbb{C}^{2n-2}, \mc{I}_{\widetilde{D}_{n-1}})$ has canonical singularities for $n \leq 5$ and is log-canonical for $n =6,7$. 

To prove that the pair $(\mbb{C}^{2n-2}, \mc{I}_{\widetilde{D}_{n-1}} )$ is canonical for $n \leq 4$ we will use Kollar-Bertini theorem \cite[Theorems 4.5, 4.5.1]{Kollar1997}, \cite[Example 9.3.50]{LazarsfeldPAGII}: in other words we will find a 
$g \in \mc{I}_{\widetilde{D}_{n-1}}$ such that $\div g$ has rational (or canonical) singularities; then Kollar-Bertini theorem implies that the pair $(\mbb{C}^{2n-2},  \mc{I}_{\widetilde{D}_{n-1}})$ is canonical. For $n=3$ such a $g$ can be chosen as the generator of minimal degree 
of $\mc{I}_{\widetilde{D}_2}$, that is,  $g = z_1 w_2 -z_2 w_1$: it defines an affine quadric cone of in $\mbb{C}^4$ projecting a smooth  quadric in $\mbb{P}^3$ from the origin of $\mbb{C}^4$. 
Hence, by \cite[Example 1.2]{Burns1974}, it has rational singularities. For $n =4$ we can use the generator of minimal degree of $\mc{I}_{\widetilde{D}_3}$ 
given by  the polynomial $g = \Delta((1,0,1), (0,1,1), \bar{z}, \bar{w})$.  
One can show that $g$ 
has rational singularities using \emph{Macaulay2} \cite{M2} and, in particular, the command {\tt hasRationalSing} of the package {\tt D-modules}.

For $n \geq 5$ it is computationally more efficient to use characteristic $p$ methods. Let now $n~=~5$.  By the equality in (\ref{eq: lctmin}) and by what we just proved, we know that for any point $x$ in a strata $X^{5}_\lambda$, with $\lambda \neq (5)$, we have 
$\lct_x(X^5, \mc{I}_{\Delta_5}) \geq \lct(X^4, \mc{I}_{\Delta_4}) >1$.  
 It is then sufficient to prove that, for a point $x \in \Delta_{1, \dots, 5}$,  
$\lct_x(\mbb{C}^{10}, \mc{I}_{\Delta_5})>1$. Because of the isomorphism (\ref{eq: identification}) it is sufficient to prove that $\lct_0(\mbb{C}^{8}, \mc{I}_{\widetilde{D}_4})  > 1$. By (\ref{eq: fptlct}) it is sufficient to prove, for some prime $p$, that $\fpt_0( (\mbb{F}_p^{2})^{4}, (\mc{I}_{\widetilde{D}_{4}})_p) > 1$. Consider the polynomials
 $g = \Delta((1,0,2,1),(0,1,0,2), \bar{z}, \bar{w})$ and $h = \Delta((1,0,2,0),(0,1,0,2), \bar{z}, \bar{w})$ in $\mc{I}_{\widetilde{D}_{4}}$; we can check, using  
 \emph{Macaulay2} and passing modulo $p=7$, that the class of $g^2 h^5$ is nonzero in 
 $\mbb{F}_7[z_1, \dots, z_{4}, w_1, \dots, w_{4}]/ \mf{m}_0^{[7]}$, thus proving that $\nu_{\mf{a}}(1) \geq 7$, where $\mf{a} = 
 (\mc{I}_{\widetilde{D}_{4}})_7$, and hence that $\fpt_0( (\mbb{F}_7^{2})^{4}, (\mc{I}_{\widetilde{D}_{4}})_7) \geq 7/6 >1$, by (\ref{eq: deffpt}).  Therefore the pair $(X^5, \mc{I}_{\Delta_5})$ has canonical singularities.

Let now $n = 6,7$. \sloppy
By the equality in (\ref{eq: lctmin}) and by what we just proved,  we already know that for any point $x$ in a stratum $X^{n}_\lambda$, with $\lambda \neq (6)$ --- in the case $n=6$ --- or $\lambda \neq (7)$ and
$\lambda \neq (6,1)$ --- in the case $n=7$ --- we have $\lct_x(X^n, \mc{I}_{\Delta_n}) \geq \lct(X^5, \mc{I}_{\Delta_5}) > 1$.  For $n=6$ it is then sufficient to prove that 
$\lct_x(\mbb{C}^{12}, \mc{I}_{\Delta_6}) \geq 1$ when $x \in \Delta_{1, \dots, 6}$; by the isomorphism (\ref{eq: identification}), it is sufficient to prove that $\lct_0(\mbb{C}^{10}, \mc{I}_{\widetilde{D}_5})  \geq 1$; once we prove it, it is sufficient to prove 
that $\lct_x(\mbb{C}^{14}, \mc{I}_{\Delta_7})>1$ for $x \in \Delta_{1, \dots, 7}$, or equivalenty, after (\ref{eq: identification}), that 
$\lct_0(\mbb{C}^{12}, \mc{I}_{\widetilde{D}_6})  \geq 1$. By (\ref{eq: fptlct}) it is sufficient to prove, for some prime $p$, that 
$\fpt_0( (\mbb{F}_p^{2})^{n-1}, (\mc{I}_{\widetilde{D}_{n-1}})_p) \geq 1$ for $n=6,7$. By the first of the inequalities (\ref{eq: fptinequality}) it is then sufficient to find a polynomial $g \in \mc{I}_{\widetilde{D}_{n-1}}$, with integral coefficients, such that, for some prime $p$,  
$\nu_{g_p}(1) = p-1$ at the origin: here, for a polynomial $g$ with integral coefficients,  we denote with $g_p$ its\!\!$\mod p$ reduction in  in  $
(\mc{I}_{\widetilde{D}_{n-1}})_p$.  Consider the polynomials with integral coefficients $g = \Delta((1,0,2,1,0), (0,1,0,1,2), \bar{z}, \bar{w})$, for $n=6$, and 
$h = \Delta((1,0,2,1,0,2),(0,1,0,1,2,1), \bar{z}, \bar{w})$, for $n=7$. Then, passing modulo $p=7$, we checked, using \emph{Macaulay2},  
that the classes of 
$g^6_7$ in $\mbb{F}_7[z_1, \dots, z_{5}, w_1, \dots, w_{5}]/ \mf{m}_0^{[7]}$ and $h^6_7$ in $\mbb{F}_7[z_1, \dots, z_{6}, w_1, \dots, w_{6}]/ \mf{m}_0^{[7]}$ are both non zero. This proves that, choosing the prime $p=7$,  $\nu_{g_7}(1) = 6 = \nu_{h_7}(1)$ and we can conclude.
\end{proof}

\subsection{Relation with the geometry of the Hilbert scheme of points}\label{relationHilbert}
The geometry of the pair $(X^n, \mc{I}_{\Delta_n})$ is not only directly related to the geometry of the isospectral Hilbert scheme $B^n$, but also 
to the geometry of the Hilbert scheme of $n$ points $X^{[n]}$ over the surface $X$. Consider the boundary $\partial X^{[n]}$ of $X^{[n]}$. 
Song proved in \cite[Proposition  4.3.5]{SongThesis} that 
$$ \lct (X^{[n]}, \mc{I}_{\partial X^{[n]}}) = \lct(S^n X, \mc{I}_{\Delta_n}^{\perm_n}) = \frac{1}{2} \lct (X^n , \mc{I}_{\Delta_n} ) \;.$$Hence proposition \ref{pps: lctinequality} implies immediately the 
\begin{crl}The log-canonical threshold of the pair $(X^{[n]}, \mc{I}_{\partial X^{[n]}})$ is bounded above by $(n-1)/d_n$. 
\end{crl}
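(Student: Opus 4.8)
The plan is to read off the bound from two facts already established in the excerpt. The first is Song's identity \cite[Proposition 4.3.5]{SongThesis}, which asserts, for a smooth complex algebraic surface $X$, the chain of equalities
$$\lct(X^{[n]}, \mc{I}_{\partial X^{[n]}}) = \lct(S^n X, \mc{I}_{\Delta_n}^{\perm_n}) = \frac{1}{2}\,\lct(X^n, \mc{I}_{\Delta_n}),$$
of which I only need the outer equality $\lct(X^{[n]}, \mc{I}_{\partial X^{[n]}}) = \tfrac12 \lct(X^n, \mc{I}_{\Delta_n})$. The second is Proposition \ref{pps: lctinequality}, which gives $\lct(X^n, \mc{I}_{\Delta_n}) \leq (2n-2)/d_n$.

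First I would cite Song's identity to convert the desired statement about the Hilbert scheme $X^{[n]}$ into one about $X^n$. Then I would substitute the bound of Proposition \ref{pps: lctinequality}, obtaining
$$\lct(X^{[n]}, \mc{I}_{\partial X^{[n]}}) = \frac{1}{2}\,\lct(X^n, \mc{I}_{\Delta_n}) \leq \frac{1}{2}\cdot\frac{2n-2}{d_n} = \frac{n-1}{d_n},$$
which is exactly the asserted bound.

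The only points requiring a moment's care — and these are not genuine obstacles — are that the hypotheses match up (Song's proposition and Proposition \ref{pps: lctinequality} are both stated for a smooth complex algebraic surface $X$) and that the integer $d_n$ appearing in the statement is the same one defined in Remark \ref{rmk: mingenerators}, so that the arithmetic $\tfrac12\cdot\tfrac{2n-2}{d_n}=\tfrac{n-1}{d_n}$ is legitimate. There is no hard step here: the corollary is an immediate consequence of the two cited results. (Alternatively, one could bypass the statement about $X^n$ entirely and derive the bound directly from the order computation of Remark \ref{rmk: changevar} combined with Song's stratified comparison, but the route above is the shortest.)
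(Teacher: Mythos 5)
Your proposal is correct and matches the paper's argument exactly: the corollary is deduced by combining Song's identity $\lct(X^{[n]}, \mc{I}_{\partial X^{[n]}}) = \tfrac12 \lct(X^n, \mc{I}_{\Delta_n})$ with Proposition \ref{pps: lctinequality}, yielding $\tfrac12 \cdot \tfrac{2n-2}{d_n} = \tfrac{n-1}{d_n}$. Nothing further is needed.
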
Moreover, conjecture \ref{conj1} would imply 
\begin{conj}The log-canonical threshold of the pair $(X^{[n]}, \mc{I}_{\partial X^{[n]}})$ is precisely given by $(n-1)/d_n$. 
\end{conj}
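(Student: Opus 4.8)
The plan is to derive the equality from the analogous statement for the pair $(X^n, \mc{I}_{\Delta_n})$ via Song's identity \cite[Proposition 4.3.5]{SongThesis}, which gives $\lct(X^{[n]}, \mc{I}_{\partial X^{[n]}}) = \tfrac{1}{2}\,\lct(X^n, \mc{I}_{\Delta_n})$. Under this identity the asserted value $(n-1)/d_n$ is equivalent to $\lct(X^n, \mc{I}_{\Delta_n}) = (2n-2)/d_n$, i.e. to Conjecture \ref{conj1}. The inequality ``$\leq$'' here is unconditional: it is Proposition \ref{pps: lctinequality}, and halving it reproduces the preceding corollary. Hence the entire content of the statement is the reverse bound $\lct(X^n, \mc{I}_{\Delta_n}) \geq (2n-2)/d_n$, and this is where all the work lies.

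First I would localise this lower bound. By the stratification formula (\ref{eq: lctmin}), the threshold at a point of $X^n_\lambda$ equals $\min_i \lct_0((\mbb{C}^2)^{\lambda_i}, \mc{I}_{\Delta_{\lambda_i}})$. Lower semicontinuity of $x \mapsto \lct_x$ gives the monotonicity $\lct_0((\mbb{C}^2)^{m+1}, \mc{I}_{\Delta_{m+1}}) \leq \lct_0((\mbb{C}^2)^m, \mc{I}_{\Delta_m})$: specialise, inside the stratum $(m,1)$ where one point is free and the product structure yields $\lct = \lct_0((\mbb{C}^2)^m, \mc{I}_{\Delta_m})$, to the small diagonal in its closure. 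Consequently the global infimum over all strata is attained at $\lambda = (n)$, and by the identification (\ref{eq: identification}) the problem reduces to the single inequality $\lct_0(\mbb{C}^{2n-2}, \mc{I}_{\widetilde{D}_{n-1}}) \geq (2n-2)/d_n$.

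To prove this lower bound I see two complementary routes. The geometric one is to construct an explicit $\perm_n$-equivariant log-resolution $f : Y \rTo (\mbb{C}^2)^n$ of the pair, obtained by blowing up the (strict transforms of the) diagonals $\Delta_S$ in increasing order of dimension, and to compute for each exceptional prime divisor $E$ both its log discrepancy $1+a_E$ in $K_Y - f^* K_{(\mbb{C}^2)^n}$ and its order $b_E := \ord_E \mc{I}_{\Delta_n}$; then $\lct_0 = \min_E (1+a_E)/b_E$, and one must verify that the minimum is realised by the valuation of the small diagonal, for which $1 + a_E = \codim \Delta_{1,\dots,n} = 2n-2$ and $b_E = d_n$ by remark \ref{rmk: changevar}. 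This is exactly the computation performed for $n=3$ in the final section. The arithmetic route exploits that $\lct$ is the limit of $F$-pure thresholds \cite{MTW2005}: by (\ref{eq: fptlct}) one has $\fpt_0((\mbb{F}_p^2)^{n-1}, (\mc{I}_{\widetilde{D}_{n-1}})_p) \leq \lct_0$ for every $p$, and since the left side tends to $\lct_0$ as $p \to \infty$, it suffices to exhibit, for each large prime $p$, products $\prod_k \Delta(\bar{p}^{(k)}, \bar{q}^{(k)})^{e_k}$ of Haiman anti-invariants lying outside $\mf{m}_0^{[p]}$ whose total exponent $\sum_k e_k$ is as close as possible to $(p-1)(2n-2)/d_n$; such surviving products would bound $\fpt_0$, hence $\lct_0$, from below by $(2n-2)/d_n$. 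The single-prime verifications in the proof of Theorem \ref{thm: logcanonical} are precisely the first instances of this construction, although they only yield the weaker bounds $\lct_0 > 1$ or $\geq 1$ rather than the exact value.

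The hard part, shared by both routes, is exactly the difficulty emphasised in the introduction. On the geometric side there is no combinatorially tractable log-resolution of the codimension-$2$ arrangement $\Delta_n$ for general $n$: pulling $\mc{I}_{\Delta_n}$ back to the natural compactifications (Fulton--MacPherson, or iterated diagonal blow-ups) produces embedded components that are very hard to track and that obstruct a clean reading of the discrepancies $a_E$ and orders $b_E$, so one cannot in general certify that the small-diagonal valuation is the minimiser. On the arithmetic side the same obstruction appears as the absence of a uniform-in-$p$ description of which products of anti-invariants survive modulo $\mf{m}_0^{[p]}$ --- equivalently, of a Frobenius-adapted basis of $\mbb{F}_p[\bar{z},\bar{w}]/\mf{m}_0^{[p]}$ paired against the generators of $\mc{I}_{\widetilde{D}_{n-1}}$. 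It is the rapid growth of this combinatorics with $n$ that keeps both the lower bound, and hence the equality, open beyond the small cases.
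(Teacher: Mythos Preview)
The statement you address is not a theorem in the paper but a \emph{conjecture}: the paper labels it as such and offers no proof. What the paper does provide is exactly the reduction you open with --- Song's identity $\lct(X^{[n]}, \mc{I}_{\partial X^{[n]}}) = \tfrac{1}{2}\lct(X^n, \mc{I}_{\Delta_n})$ makes the statement equivalent to Conjecture~\ref{conj1}, and the inequality ``$\leq$'' is Proposition~\ref{pps: lctinequality}. Your first paragraph reproduces this precisely and is correct; your second paragraph (the monotonicity in $m$ of $\lct_0((\mbb{C}^2)^m,\mc{I}_{\Delta_m})$ via semicontinuity, reducing the global infimum to the small-diagonal stratum) is a clean localisation step that the paper leaves implicit.

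Your remaining paragraphs are not a proof and you do not present them as one: they are an honest survey of the two natural strategies for the missing lower bound and of why each is currently blocked. This matches the paper's own stance --- the paper leaves the lower bound open as Conjecture~\ref{conj1} and establishes only the partial bounds of Theorem~\ref{thm: logcanonical} by the same $F$-pure-threshold technique you sketch. Your description of the obstructions (embedded components in candidate log-resolutions, lack of a uniform-in-$p$ survival criterion for products of Haiman anti-invariants) is accurate and in line with the introduction. In short: your reduction coincides with the paper's, and both you and the paper correctly identify the equality as open.
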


\section{Two resolutions of $B^3$}
The aim of this subsection is two provide two explicit resolutions of singularities of $B^3$; the first will be \emph{crepant}, the second will be \emph{$\perm_3$-equivariant}. We begin with some remarks and technical lemmas. 
\begin{remark}Let $M$ a smooth algebraic variety and let $F$ be a coherent sheaf over $M$. We recall that an integral subscheme 
$V$ of $M$ is called a \emph{prime cycle associated to $F$} if there exists an invertible coherent $\FS_V$-module $L$ and an embedding 
 $L \rInto F$ of coherent $\FS_M$-modules. 
 \end{remark}

\begin{remark}Let $M$ be a smooth algebraic variety and $Y$ a smooth subvariety. Let $Z \subseteq M$ be a closed subscheme, defined by the ideal sheaf $\mc{I}_Z$. 
Let $r = \ord_Y \mc{I}_Z$ the order of $Z$ along $Y$. Consider the blow-up $f: \mathrm{Bl}_Y M \rTo M$ of $Y$ in $M$ and denote with $E$ its exceptional divisor. 
The \emph{weak transform} $\widetilde{Z}$ of $Z$ in $\mathrm{Bl}_Y M$ is defined by the residual ideal 
$\mc{I}_{\widetilde{Z}} := (\mc{I}_{f^{-1}(Z)}: \mc{I}_{E}^r)$. The ideal of the total transform $f^{-1}(Z)$ is then given by the product 
$$ \mc{I}_{f^{-1}(Z)} = \mc{I}_{E}^r  \cdot \mc{I}_{\widetilde{Z}} \;.$$It is well known that the weak transform does not necessarily coincide with the 
strict transform $\widehat{Z}$; in general one just has that $\mc{I}_{\widetilde{Z}} \subseteq \mc{I}_{\widehat{Z}}$, and that the two ideals coincide outside the exceptional divisor. Indeed the weak transform $\widetilde{Z}$ could contain embedded components over the exceptional divisor, while the strict transform doesn't. 
This is, in any case, the only possible difference between $\widetilde{Z}$ and $\widehat{Z}$,  as the next criterion proves. 
\end{remark}
\begin{pps}\label{pps: lambda}Let $M$ be a smooth algebraic variety and $Y$ a smooth subvariety. Let $Z \subseteq M$ be a closed subscheme.
Consider the blow-up map $f: \mathrm{Bl}_{Y} M \rTo M$ and let $E$ be the exceptional divisor. Then the weak transform $\widetilde{Z}$ of $Z$ coincide with the strict transform $\widehat{Z}$ if and only if $E$ does not contain any prime cycle associated to $\widetilde{Z}$. In this case, for any positive integer $l$,  the subschemes $lE$ and $\widehat{Z}$ are transverse. 
\end{pps}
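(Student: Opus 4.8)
The plan is to work locally and reduce everything to the behaviour of one nonzerodivisor --- a local equation of $E$ --- on the structure sheaf of $\widetilde Z$. Since $E$ is an effective Cartier divisor on $\mathrm{Bl}_Y M$, over a small affine open $\Spec R\subseteq\mathrm{Bl}_Y M$ its ideal is $(t)$ for a nonzerodivisor $t\in R$, and $\mc{I}_E^{k}=(t^{k})$. First I would recall that the strict transform $\widehat Z$ is the scheme-theoretic closure of $f^{-1}(Z)\setminus E$, so that $\mc{I}_{\widehat Z}$ is the $t$-saturation $\bigcup_{k\ge 0}(\mc{I}_{f^{-1}(Z)}:t^{k})$ of $\mc{I}_{f^{-1}(Z)}$. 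Writing $\mc{I}_{\widetilde Z}=(\mc{I}_{f^{-1}(Z)}:t^{r})$ with $r=\ord_Y\mc{I}_Z$ and using the colon identity $((I:a):b)=(I:ab)$, one has $(\mc{I}_{f^{-1}(Z)}:t^{k})=(\mc{I}_{\widetilde Z}:t^{k-r})$ for $k\ge r$, hence $\mc{I}_{\widehat Z}=\bigcup_{j\ge 0}(\mc{I}_{\widetilde Z}:t^{j})$ is the $t$-saturation of $\mc{I}_{\widetilde Z}$ as well. Consequently $\widetilde Z=\widehat Z$ if and only if $\mc{I}_{\widetilde Z}$ is already $t$-saturated, i.e. $(\mc{I}_{\widetilde Z}:t)=\mc{I}_{\widetilde Z}$, i.e. $t$ is a nonzerodivisor on $\FS_{\widetilde Z}=R/\mc{I}_{\widetilde Z}$.

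Next I would translate this nonzerodivisor condition into the language of prime cycles. Over a Noetherian ring the zerodivisors on a finitely generated module $N$ form the union of the finitely many associated primes of $N$, so by prime avoidance $t$ fails to be a nonzerodivisor on $\FS_{\widetilde Z}$ exactly when $t\in\mf{p}$ for some $\mf{p}\in\mathrm{Ass}(\FS_{\widetilde Z})$. The dictionary to verify is that these associated primes $\mf{p}$, with $V:=V(\mf{p})$, are precisely the prime cycles associated to $\widetilde Z$: if $\mf{p}=\Ann(s)$ then $R/\mf{p}\rInto\FS_{\widetilde Z}$ embeds the (invertible) $\FS_V$-module $\FS_V$ into $\FS_{\widetilde Z}$; conversely any embedding $L\rInto\FS_{\widetilde Z}$ with $L$ invertible over an integral $V$ localizes at the generic point of $V$ to an embedding of a rank-one $\kappa(V)$-space into $(\FS_{\widetilde Z})_{\mf{p}_V}$, which forces $\mf{p}_V\in\mathrm{Ass}(\FS_{\widetilde Z})$. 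Under this correspondence $t\in\mf{p}$ if and only if $V\subseteq E$, since $E$ is the closed set cut out by $t$. Putting the two paragraphs together gives $\widetilde Z=\widehat Z$ $\iff$ $t$ is a nonzerodivisor on $\FS_{\widetilde Z}$ $\iff$ no prime cycle associated to $\widetilde Z$ lies in $E$, which is the asserted equivalence.

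For the last statement I would just note that $\mc{I}_{\widehat Z}$ is by construction the $\mc{I}_E$-saturation of $\mc{I}_{f^{-1}(Z)}$, hence is itself $\mc{I}_E$-saturated; equivalently $t$ --- and therefore $t^{l}$ for every $l\ge 1$ --- is a nonzerodivisor on $\FS_{\widehat Z}$. Since $lE$ is the effective Cartier divisor $\{t^{l}=0\}$, the free resolution $0\to\FS_{\mathrm{Bl}_Y M}\xrightarrow{t^{l}}\FS_{\mathrm{Bl}_Y M}\to\FS_{lE}\to 0$ yields $\Tor_1^{\FS_{\mathrm{Bl}_Y M}}(\FS_{lE},\FS_{\widehat Z})=\{s\in\FS_{\widehat Z}\mid t^{l}s=0\}=0$ and $\Tor_i=0$ for $i\ge 2$, i.e. $lE$ and $\widehat Z$ are Tor-independent. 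The step I expect to need the most care is the bookkeeping of the three ideals $\mc{I}_{f^{-1}(Z)}$, $\mc{I}_{\widetilde Z}$, $\mc{I}_{\widehat Z}$ and the identification $\mc{I}_{\widehat Z}=(\mc{I}_{\widetilde Z})^{\mathrm{sat}}$, together with checking both directions of the dictionary between the invertible-sheaf definition of a prime cycle and the usual notion of an associated prime; once these are in place the rest is the standard correspondence among nonzerodivisors, associated primes and associated prime cycles.
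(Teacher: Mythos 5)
Your argument is correct, but it reaches the main equivalence by a genuinely different route from the paper. You characterize $\widehat Z$ as the scheme-theoretic closure of $f^{-1}(Z)\setminus E$, so that locally $\mc I_{\widehat Z}$ is the $t$-saturation of $\mc I_{f^{-1}(Z)}$ ($t$ a local equation of $E$), and the colon identity shows this is also the $t$-saturation of $\mc I_{\widetilde Z}$; hence $\widetilde Z=\widehat Z$ iff $t$ is a nonzerodivisor on $\FS_{\widetilde Z}$, which you then translate into the prime-cycle condition through associated primes. The paper instead dismisses necessity as clear and proves sufficiency via the identification $\widehat Z=\mathrm{Bl}_{Y\cap Z}Z$ (Eisenbud--Harris, Prop.\ IV-21) and the universal property of the blow-up: injectivity of the canonical section $\lambda$ of $\FS(E)$ on $\FS_{\widetilde Z}$ makes $\mc I_{\widetilde Z\cap E/\widetilde Z}$ invertible, so $f|_{\widetilde Z}$ factors through $\widehat Z$, and the standing inclusion $\widehat Z\subseteq\widetilde Z$ forces equality. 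Your route is more elementary and purely ideal-theoretic, treats both implications uniformly, and shows in passing that the transversality statement $\Tor_j(\FS_{lE},\FS_{\widehat Z})=0$ for $j>0$ holds unconditionally, since $\mc I_{\widehat Z}$ is always $\mc I_E$-saturated (equivalently, the strict transform has no associated points on $E$); what the paper's route buys is the conceptual identification of the common transform with a blow-up of $Z$. The closing Tor computation via the length-one resolution of $\FS_{lE}$ is identical in both. One point where you are no more (and no less) precise than the paper: upgrading a local associated prime $\mf p\ni t$ to a prime cycle in the paper's global sense --- a closed integral $V\subseteq E$ together with an invertible $\FS_V$-module embedded in $\FS_{\widetilde Z}$ as a coherent $\FS$-module --- requires a word about extending the local subsheaf generated by the section $s$ with $\Ann(s)=\mf p$ to a coherent subsheaf supported on the closure of $V(\mf p)$ (and, if necessary, passing to an invertible subsheaf of it, e.g.\ by twisting down by an ample divisor); the paper asserts the equivalence between ``no prime cycle of $\widetilde Z$ in $E$'' and injectivity of $\lambda$ without comment, so your write-up is at the same level of rigor on this point.
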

\begin{proof}The necessity of the condition is clear. We just have to prove the sufficiency. Recall that the strict transform $\widehat{Z}$ can be identified with the blow-up $\mathrm{Bl}_{Y \cap Z} Z$: this is a consequence, for example, of  \cite[Proposition IV-21]{EisenbudHarrisGS}. 
Indicate with $\lambda$ the canonical section of $\FS_{\mathrm{Bl}_Y M}(E)$. We have that $E$ does not contain prime cycles 
associated to $\widetilde{Z}$ if and only if the morphism $\lambda: \FS_{\widetilde{Z}}(-E) \rTo \FS_{\widetilde{Z}}$ is injective.  In this case the ideal $\mc{I}_{\widetilde{Z} \cap E / \widetilde{Z} }$ of $\widetilde{Z} \cap E $ in $\widetilde{Z}$ is an invertible ideal of $\FS_{\widetilde{Z}}$. Hence 
the map $f \trest_{\widetilde{Z}}: \widetilde{Z} \rTo Z$ factors via the blow-up $\mathrm{Bl}_{Y \cap Z}  Z$, that is, via the strict transform $\widehat{Z}$. Hence we have the injection of schemes $\widetilde{Z} \rInto \widehat{Z}$. But it is always true that $\widehat{Z} \subseteq \widetilde{Z}$. Hence the weak transform coincides with the strict one. In this case, for any fixed positive integer $l$,  the morphism $\lambda^l: \FS_{\widehat{Z}}(-lE) \rTo \FS_{\widehat{Z}}$ is injective. 
Since $R^\bullet := 0 \rTo \FS_{\mathrm{Bl}_Y M}(-lE) \rTo \FS_{\mathrm{Bl}_Y M}$ is a locally free resolution of $\FS_{lE}$, we can compute $\Tor_{j}(\FS_{lE}, \FS_{\widehat{Z}})$ as of the $(-j)$-cohomology of the complex $R^\bullet \tens \FS_{\widehat{Z}}$, which is 
$0 \rTo  \FS_{\widehat{Z}}(-lE) \rTo^{\lambda^l} \FS_{\widehat{Z}} \rTo 0$. 
Hence 
$\Tor_{j}(\FS_{lE}, \FS_{\widehat{Z}}) = 0$ for $j>0$. 
\end{proof}

\begin{remark}\label{rmk: H}Let $M$ be a smooth algebraic variety, and  $Y$  a smooth subvariety. 
Consider the blow-up map $f: \mathrm{Bl}_Y M \rTo M$. Let $H$ be an hypersurface in $M$.  
Then its weak and strict transform in $\mathrm{Bl}_Y M$ coincide. \end{remark}
\begin{proof}Let $E$ be the exceptional divisor. The weak transform $\widetilde{H}$ is a divisor whose associated prime cycles are the irreducible components of $\widetilde{H}$. Since, by definition of $\widetilde{H}$, one has that $E \not \subset \widetilde{H}$, then $\codim_{\mathrm{Bl_Y M}} E \cap \widetilde{H} = 2$ and hence the local equations of $E$ and $\widetilde{H}$ define a regular sequence; hence $E$ does not contain any prime cycles relative to $\widetilde{H}$. Hence $\widetilde{H} = \widehat{H}$. 
\end{proof}

\begin{lemma}Let $M$ be a smooth algebraic variety and let $Y, W, Z$ three subschemes of $M$, such that 
$Y$ is closed, $W$ is integral and that $Y \not \subseteq W$. Let $\widehat{W}$, $\widehat{Z}$ be the strict transforms of $W$ 
and $Z$ inside $\mathrm{Bl}_Y M$. Then $\ord_W \mc{I}_Z = \ord_{\wh{W}} \mc{I}_{\wh{Z}}$. 
\end{lemma}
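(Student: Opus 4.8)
The plan is to reduce the equality to a comparison of stalks at the generic point of $W$, where the blow-up map is an isomorphism.

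Recall first that, since $M$ is smooth, for an integral subscheme $W$ of $M$ with generic point $\eta_W$ the local ring $\FS_{M,\eta_W}$ is regular, and by definition $\ord_W \mc{I}_Z$ is the $\mf{m}_{\eta_W}$-adic order of the stalk $(\mc{I}_Z)_{\eta_W}\subseteq \FS_{M,\eta_W}$, i.e. the largest $r$ with $(\mc{I}_Z)_{\eta_W}\subseteq \mf{m}_{\eta_W}^r$ (this is a well-defined nonnegative integer because $\mc{I}_Z$ is a nonzero ideal on the irreducible variety $M$). The analogous statement holds for $\ord_{\widehat{W}}\mc{I}_{\widehat{Z}}$, computed in $\FS_{\mathrm{Bl}_Y M,\eta_{\widehat{W}}}$ at the generic point $\eta_{\widehat{W}}$ of the integral scheme $\widehat{W}$. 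So it suffices to produce an isomorphism of local rings that matches $(\mc{I}_Z)_{\eta_W}$ with $(\mc{I}_{\widehat{Z}})_{\eta_{\widehat{W}}}$ and $\mf{m}_{\eta_W}$ with $\mf{m}_{\eta_{\widehat{W}}}$.

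For this I would use that $f\colon \mathrm{Bl}_Y M \rTo M$ restricts to an isomorphism over the open set $U:=M\setminus Y$, and that (since $W$ is integral and, by the hypotheses, its generic point does not lie in $Y$) $\eta_W\in U$. By definition the strict transform $\widehat{W}$ is the scheme-theoretic closure of $f^{-1}(W\cap U)$ in $\mathrm{Bl}_Y M$; as $f$ is an isomorphism over $U$, the map $f^{-1}(W\cap U)\rTo W\cap U$ is an isomorphism, so $\widehat{W}$ is integral, its generic point $\eta_{\widehat{W}}$ is the unique point over $\eta_W$, and $f$ induces an isomorphism $\FS_{M,\eta_W}\simeq \FS_{\mathrm{Bl}_Y M,\eta_{\widehat{W}}}$ carrying $\mf{m}_{\eta_W}$ to $\mf{m}_{\eta_{\widehat{W}}}$. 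Likewise, over $U$ the strict transform $\widehat{Z}$ agrees with the scheme-theoretic preimage $f^{-1}(Z\cap U)$ (strict and total transform coincide where $f$ is an isomorphism), so the same isomorphism of local rings carries $(\mc{I}_Z)_{\eta_W}$ onto $(\mc{I}_{\widehat{Z}})_{\eta_{\widehat{W}}}$. Combining the two identifications yields $\ord_W\mc{I}_Z=\ord_{\widehat{W}}\mc{I}_{\widehat{Z}}$.

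I do not expect a genuine obstacle: the whole content is the bookkeeping of the previous paragraph — pinning down the generic point of $\widehat{W}$ and checking that the strict transforms restrict to the obvious things over $U$. If one prefers to avoid generic points, the identical argument can be phrased by observing that $f$ restricts to a projective birational morphism $\widehat{W}\rTo W$ that is an isomorphism over the nonempty open subset $W\cap U\subseteq W$, and then invoking that the order of an ideal along an integral subvariety can be read off over any nonempty open subset; the only thing to verify is that $\mc{I}_{\widehat{Z}}$ pulls back, over $W\cap U$, to $\mc{I}_Z$.
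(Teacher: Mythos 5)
Your argument is correct and is essentially the paper's own proof: both reduce the claim to the fact that $\ord$ along an integral subvariety is computed in the local ring at its generic point, and that the blow-up, being an isomorphism away from $Y$, induces an isomorphism $\FS_{M,W}\simeq \FS_{\mathrm{Bl}_Y M,\wh{W}}$ carrying $\mc{I}_{Z,W}$ onto $\mc{I}_{\wh{Z},\wh{W}}$. The only caveat is that the condition you actually invoke (and that both proofs need, so that the generic point of $W$ avoids $Y$ and $\wh{W}$ is nonempty) is $W \not\subseteq Y$ rather than the literally stated $Y \not\subseteq W$, which appears to be a slip in the statement.
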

\begin{proof}Note that if $S, T$ are two subschemes of a smooth algebraic variety $V$, with $T$ integral, then 
$\ord_T \mc{I}_S$ can be characterized as $ \ord_T \mc{I}_S = \max \{ n \in \mbb{N} \; | \; \mc{I}_{S, T} \subseteq \mf{m}_T^n  \}$ where 
$\mf{m}_T$ is the maximal ideal  of the local ring $\FS_{V, T}$ ---  that is, the ring of regular functions $g$ defined on some open set $U$ intersecting $T$ \cite[Exercise 3.13]{HartshorneAG}--- and where $\mc{I}_{S, T}$ is the ideal of functions $g$ in $\FS_{V, T}$ vanishing over $S \cap U$, if $U$ is the open set of definition of $g$. Now  the blow-up map $f: \mathrm{Bl}_Y M \rTo M$ induces an isomorphism of local rings 
$f_W^* : \FS_{M, W} \rTo \FS_{\mathrm{Bl}_Y M, \wh{W} }$ under which $\mc{I}_{Z, W}$ is sent onto $\mc{I}_{\wh{Z}, \wh{W}}$, hence the statement. 
\end{proof}
\begin{lemma}\label{lmm: w1w2}Let $M$ be a smooth algebraic variety of dimension at least $3$; let $H$ be a smooth hypersurface in $M$ and  $W_1$, $W_2$ two smooth subvarieties of $M$ contained in $H$ and transverse inside $H$. Consider now the composition $f$ of 
blow-ups 
$$ f: B:=\mathrm{Bl}_{\wh{W_2}} \mathrm{Bl}_{W_1} M \rTo^{f_2}  \mathrm{Bl}_{W_1} M \rTo^{f_1} M \;,$$where $\wh{W_2}$ is the strict transform of $W_2$ inside $\mathrm{Bl}_{W_1} M$. Denote with  $E_{W_1}$ the exceptional divisor  of $\mathrm{Bl}_{W_1} M$ 
and with $E_{\widehat{W_2}}$ that of $\mathrm{Bl}_{\wh{W_2}} \mathrm{Bl}_{W_1} M$. 
Then  $f$ is an isomorphism outside $f^{-1}(W_1 \cup W_2)$; moreover
$$ f^{-1}(\mc{I}_{W_1 \cup W_2}) = \mc{I}_{\widehat{E_{W_1}}} \cdot \mc{I}_{E_{\widehat{W_2}}} = \FS_B( -\widehat{E_{W_1}} - E_{\widehat{W_2}}) \;. 
$$Finally the relative canonical bundle $K_{B/M}$ is isomorphic to $\FS_B(\widehat{E_{W_1}} + E_{\widehat{W_2}})$. 
\end{lemma}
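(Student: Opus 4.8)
The plan is to treat the three assertions in turn, the computation of $f^{-1}(\mc{I}_{W_1 \cup W_2})$ being the crux. \emph{Exceptional locus.} A blow-up is an isomorphism over the complement of its centre, so $f_1$ is an isomorphism over $M \setminus W_1$ and $f_2$ over $\mathrm{Bl}_{W_1}M \setminus \wh{W_2}$. As $f_1$ is proper, $f_1(\wh{W_2})$ equals the closure of $f_1(f_1^{-1}(W_2 \setminus W_1)) = W_2 \setminus W_1$, so $f_1(\wh{W_2}) \subseteq W_2$, i.e. $\wh{W_2} \subseteq f_1^{-1}(W_2)$; this is disjoint from $f_1^{-1}(M \setminus (W_1 \cup W_2))$, hence over $M \setminus (W_1 \cup W_2)$ both $f_1$ and $f_2$ are isomorphisms, which is the first assertion.

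\emph{Pull-back of the ideal.} First I would record the scheme-theoretic identity $\mc{I}_{W_1 \cup W_2} = \mc{I}_H + \mc{I}_{W_1}\cdot\mc{I}_{W_2}$: the inclusion $\supseteq$ holds because $W_1, W_2 \subseteq H$, and for $\subseteq$ one reduces modulo $\mc{I}_H$ and uses that, $W_1$ and $W_2$ being transverse inside $H$, their ideals in $\FS_H$ are generated by complementary regular sequences, so that their intersection equals their product. Applying $f_1^{-1}$, and using $\ord_{W_1}\mc{I}_H = 1$ together with Remark~\ref{rmk: H} (weak transform $=$ strict transform for the hypersurface $H$), one gets $f_1^{-1}\mc{I}_H = \FS(-E_{W_1})\cdot\mc{I}_{\wh H}$ and $f_1^{-1}\mc{I}_{W_1} = \FS(-E_{W_1})$, whence
$$ f_1^{-1}\mc{I}_{W_1 \cup W_2} = \FS(-E_{W_1}) \cdot \big( \mc{I}_{\wh H} + f_1^{-1}\mc{I}_{W_2} \big) . $$
The key step is the identity $\mc{I}_{\wh H} + f_1^{-1}\mc{I}_{W_2} = \mc{I}_{\wh{W_2}}$, which I would verify \'etale-locally, picking coordinates $x_0, \dots, x_m$ on $M$ with $H = \{x_0 = 0\}$, $W_1 = \{x_0 = x_1 = \dots = x_a = 0\}$, $W_2 = \{x_0 = x_{a+1} = \dots = x_{a+b} = 0\}$ (possible by smoothness and transversality inside $H$) and computing in the affine charts of $\mathrm{Bl}_{W_1}M$: in a chart where some $x_i$ with $1 \le i \le a$ is the leading coordinate, $\mc{I}_{\wh H} + f_1^{-1}\mc{I}_{W_2}$ comes out to be the ideal of the smooth complete intersection $\wh{W_2}$, while in the chart where $x_0$ leads it is the unit ideal (there $\wh H$ and $\wh{W_2}$ are empty). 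Granting this, $f_1^{-1}\mc{I}_{W_1 \cup W_2} = \FS(-E_{W_1}) \cdot \mc{I}_{\wh{W_2}}$. Pulling back once more by the blow-up $f_2$ of $\wh{W_2}$, one has $f_2^{-1}\mc{I}_{\wh{W_2}} = \FS(-E_{\wh{W_2}})$ by definition, and since $\wh{W_2}$ meets a dense open set disjoint from $E_{W_1}$ one has $\ord_{\wh{W_2}}\mc{I}_{E_{W_1}} = 0$, so $f_2^*E_{W_1} = \wh{E_{W_1}}$ again by Remark~\ref{rmk: H}; multiplying,
$$ f^{-1}\mc{I}_{W_1 \cup W_2} = \FS_B(-\wh{E_{W_1}}) \cdot \FS_B(-E_{\wh{W_2}}) = \mc{I}_{\wh{E_{W_1}}}\cdot\mc{I}_{E_{\wh{W_2}}} = \FS_B(-\wh{E_{W_1}} - E_{\wh{W_2}}) . $$

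\emph{Relative canonical bundle.} Here I would use the standard formula $K_{\mathrm{Bl}_{Z}N/N} = (\codim_{N}Z - 1)\,E$ for the canonical class of a blow-up along a smooth centre, applied to $f_1$ (centre $W_1$) and to $f_2$ (centre $\wh{W_2}$, which is smooth by transversality inside $H$ and has codimension $\codim_M W_2$ in $\mathrm{Bl}_{W_1}M$). With the chain rule $K_{B/M} = f_2^*K_{\mathrm{Bl}_{W_1}M/M} + K_{B/\mathrm{Bl}_{W_1}M}$ and $f_2^*E_{W_1} = \wh{E_{W_1}}$ one gets $K_{B/M} = (\codim_M W_1 - 1)\wh{E_{W_1}} + (\codim_M W_2 - 1)E_{\wh{W_2}}$, which, since $W_1$ and $W_2$ have codimension two in $M$, is exactly $\wh{E_{W_1}} + E_{\wh{W_2}}$.

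\emph{Main obstacle.} The delicate point is the local identity $\mc{I}_{\wh H} + f_1^{-1}\mc{I}_{W_2} = \mc{I}_{\wh{W_2}}$. Although $W_1$ and $W_2$ are transverse inside $H$, in $M$ they are tangent to one another along $H$ (both tangent spaces lie in $T_H$), so the total transform $f_1^{-1}(W_2)$ in $\mathrm{Bl}_{W_1}M$ is reducible, acquiring a spurious component supported over $W_1 \cap W_2$ inside $E_{W_1}$; thus the weak transform of $W_2$ is strictly smaller than the strict transform $\wh{W_2}$. The content of the local computation is that adjoining $\mc{I}_{\wh H}$ — that is, passing through the identity $\mc{I}_{W_1 \cup W_2} = \mc{I}_H + \mc{I}_{W_1}\mc{I}_{W_2}$ — removes exactly this spurious component, and this is the only place where the hypothesis $W_1, W_2 \subseteq H$ is genuinely needed.
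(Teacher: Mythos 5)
Your argument is correct, and it reaches the same conclusions by a somewhat different organization than the paper. The paper's proof consists of a single explicit coordinate computation in the model $M=\mbb{C}^3$, $\mc{I}_H=(x)$, $\mc{I}_{W_1}=(x,y)$, $\mc{I}_{W_2}=(x,z)$, $\mc{I}_{W_1\cup W_2}=(x,yz)$ (left to the reader), followed by the observation that the general case is obtained locally from this model by a smooth base change in adapted coordinates. You instead isolate the coordinate-free identity $\mc{I}_{W_1\cup W_2}=\mc{I}_H+\mc{I}_{W_1}\cdot\mc{I}_{W_2}$ (valid by transversality inside $H$), pull it back through $f_1$ using $\ord_{W_1}\mc{I}_H=1$ and remark \ref{rmk: H}, and reduce the whole coordinate work to the single chart verification $\mc{I}_{\wh{H}}+f_1^{-1}\mc{I}_{W_2}=\mc{I}_{\wh{W_2}}$, finishing with $f_2^{-1}\mc{I}_{\wh{W_2}}=\FS(-E_{\wh{W_2}})$, $f_2^*E_{W_1}=\wh{E_{W_1}}$, and the standard discrepancy formula for smooth blow-ups instead of computing $K_{B/M}$ in coordinates; I checked your chart claim in the relevant charts and it holds. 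What your route buys is a clearer view of the mechanism (adjoining $\mc{I}_{\wh H}$ is exactly what kills the embedded/spurious part of the total transform of $W_2$ over $W_1\cap W_2$) and an explicit localization of where each hypothesis enters, in particular that codimension $2$ of $W_1,W_2$ --- which the lemma uses implicitly and the paper encodes in its model --- is needed only for the canonical-bundle statement, the ideal formula being valid in any codimension; what the paper's route buys is brevity, since one computation with $(x,yz)$ plus smooth base change settles all three assertions at once. Two cosmetic points: in your closing aside the weak transform of $W_2$ is strictly \emph{larger} than $\wh{W_2}$ as a scheme (its ideal is smaller), and $W_1$, $W_2$ are not literally tangent to each other --- the relevant fact is that $T_{W_1}+T_{W_2}\subseteq T_H$, so they fail to be transverse in $M$; neither remark affects the proof.
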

\begin{proof}In the particular case in which $M = \mbb{C}^3$; $\mc{I}_H = (x)$; $\mc{I}_{W_1} = (x, y)$; $\mc{I}_{W_2} = (x, z)$ and hence $\mc{I}_{W_1 \cup W_2} = (x, yz)$,  the statement can be proved by an explicit computation in coordinates, which we leave to the reader. 

Le't now pass to the general case. Consider a point $p$ in the intersection $W_1 \cap W_2$. Over an adequate open neighbourhood $U$ of $p$ in the standard complex topology, we can find local holomorphic coordinates $x, y, z$ such that $H$ is defined (over $U$) by the zeros of $x$, and 
$W_1$ and $W_2$ by the ideals $(x,y)$ and $(x,z)$, respectively.
Alternatively, one can find an adequate affine neighbourhood $U$ of $p$ and regular function $x, y, z$ over $U$ such that the differentials $dx, dy, dz$ are independent in $\mf{m}_q / \mf{m}_q^2$ for all $q \in U$ and such that $H$, $W_1$, $W_2$ are defined by ideals
of the regular functions $(x)$, $(x,y)$ and $(x, z)$ as in the holomorphic case. Hence the general situation can be  obtained locally from the particular one above by a smooth base change: the statement follows. 
\end{proof}

\begin{lemma}\label{lmm: locallemma}Let $M$ be a smooth algebraic variety, $H$ a smooth hypersurface of $M$, and $W$ and $Q$ two codimension 2 smooth subvarieties of $M$ such that $Q \subseteq H$, $W \cap H \subseteq Q$ and $W \cap H$ is a smooth 
codimension 3 subvariety of $M$.  Consider the blow-up $f: \mathrm{Bl}_W M \rTo M$ of 
$W$ in $M$, with exceptional divisor $E_W$. Then 
$$ f^{-1} (\mc{I}_W \cap \mc{I}_Q) = \mc{I}_{E_W} \cdot \mc{I}_{\widehat{Q}} = \mc{I}_{E_W} \cap \mc{I}_{\widehat{Q}}$$
where  $\widehat{Q}$
denote the strict transform of $Q$ 
in $\mathrm{Bl}_W M$. 
\end{lemma}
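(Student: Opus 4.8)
The plan is to reduce everything to an explicit coordinate computation in a standard local model, in the spirit of the proof of Lemma~\ref{lmm: w1w2}. The assertion is local on $M$, and it is trivial away from $W\cap H$: away from $W$ the morphism $f$ is an isomorphism and $\mc{I}_{E_W}=\FS$, so there $\mc{I}_W\cap\mc{I}_Q=\mc{I}_Q$ and all three ideals in the statement reduce to $\mc{I}_{\widehat{Q}}$; at a point of $W$ not lying on $H$ --- equivalently not on $Q$, since $W\cap Q=W\cap H$ --- the ideal $\mc{I}_Q$ is locally the unit ideal, $\widehat{Q}$ misses that part of $\mathrm{Bl}_W M$, and the three ideals reduce to $\mc{I}_{E_W}$. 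So I would only have to check the identities on a neighbourhood of a point $p\in W\cap H$ (analytic, or \'etale onto affine space, as in Lemma~\ref{lmm: w1w2}).

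Near such a $p$ I would bring the configuration into the form $(M,H,W,Q)=(\mbb{C}^m,\,V(x_1),\,V(x_2,x_3),\,V(x_1,x_2))$. Since $W\cap H$ is smooth of codimension $3$ while $H$ has codimension $1$, $W$ meets $H$ transversally along $W\cap H$; hence a local equation $x_1$ of $H$ and local equations $x_2,x_3$ of $W$ have independent differentials at $p$ and extend to local coordinates $x_1,\dots,x_m$ with $H=V(x_1)$, $W=V(x_2,x_3)$. Writing $\mc{I}_Q=(x_1,q)$ with $q$ restricting to a local equation of the smooth divisor $Q$ of $H$, the hypothesis $W\cap H\subseteq Q$ forces $q\in(x_1,x_2,x_3)$, so $q\equiv c_2x_2+c_3x_3\pmod{x_1}$ with $(c_2(p),c_3(p))\neq(0,0)$ by smoothness of $Q$; after possibly interchanging $x_2$ and $x_3$ (which does not change $W$) and applying the triangular substitution $x_2\mapsto x_2+c_2^{-1}c_3x_3$ (which changes neither $H=V(x_1)$ nor $W=V(x_2,x_3)$), one obtains $Q=V(x_1,x_2)$. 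I expect this normalization --- in particular using $W\cap H\subseteq Q$ together with the smoothness of $Q$ to pin down $Q=V(x_1,x_2)$ once $H$ and $W$ have been put in place --- to be the only delicate step; everything afterwards is routine.

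Finally I would compute in the model, where $\mc{I}_W\cap\mc{I}_Q=(x_2,x_3)\cap(x_1,x_2)=(x_2,\,x_1x_3)$; since blowing up $V(x_2,x_3)$ and all the ideal operations involved commute with the smooth projection $\mbb{C}^m\to\mbb{C}^3$ onto the first three coordinates, I may take $M=\mbb{C}^3$. On the chart $x_3=x_2t$ of $\mathrm{Bl}_{V(x_2,x_3)}\mbb{C}^3$ one has $\mc{I}_{E_W}=(x_2)$, while $Q\setminus W=\{x_1=x_2=0,\ x_3\neq 0\}$ lies over $[x_2:x_3]=[0:1]$, so that $\widehat{Q}$ misses this chart and $\mc{I}_{\widehat{Q}}=\FS$; then $f^{-1}(\mc{I}_W\cap\mc{I}_Q)=(x_2,\,x_1x_2t)=(x_2)$, so all three ideals equal $(x_2)$. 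On the chart $x_2=x_3s$ one has $\mc{I}_{E_W}=(x_3)$, $\mc{I}_{\widehat{Q}}=(x_1,s)$, $f^{-1}(\mc{I}_W\cap\mc{I}_Q)=(x_3s,\,x_1x_3)=(x_3)(x_1,s)$ and also $(x_3)\cap(x_1,s)=(x_3)(x_1,s)$, so again the three ideals coincide, which proves the lemma. (The equality $\mc{I}_{E_W}\cdot\mc{I}_{\widehat{Q}}=\mc{I}_{E_W}\cap\mc{I}_{\widehat{Q}}$ could also be obtained coordinate-free: $\widehat{Q}$ is integral and not contained in $E_W$, so a local equation of $E_W$ is a nonzerodivisor on $\FS_{\widehat{Q}}$ and $\Tor_1(\FS_{E_W},\FS_{\widehat{Q}})=0$, exactly as at the end of the proof of Proposition~\ref{pps: lambda}.)
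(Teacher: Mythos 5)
Your proposal is correct and takes essentially the same route as the paper: both reduce to the local normal form $\mc{I}_H=(x_1)$, $\mc{I}_W=(x_2,x_3)$, $\mc{I}_Q=(x_1,x_2)$, so that $\mc{I}_W\cap\mc{I}_Q=(x_2,x_1x_3)$, and then verify the three equalities by the explicit computation on the two charts of $\mathrm{Bl}_W M$, a computation the paper states as ``easy'' and leaves to the reader. Your additional justifications --- the transversality argument producing the normal coordinates and the nonzerodivisor argument for $\mc{I}_{E_W}\cdot\mc{I}_{\widehat{Q}}=\mc{I}_{E_W}\cap\mc{I}_{\widehat{Q}}$ --- simply fill in details the paper omits, and they are accurate.
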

\begin{proof}The statement is local in nature, over the base $M$: hence, by placing ourserlves on a small open neighbourhood of a point 
$p \in W \cap H$ in the complex topology, equipped with some holomorphic coordinates $(x, y, z, w_1, \dots, w_r)$, we can suppose 
that the ideals of $H$, $W$ and $Q$ are given locally by $\mc{I}_H = (z)$, $\mc{I}_W = (x,y)$, $\mc{I}_Q = (x, z)$. Then $
\mc{I}_W \cap \mc{I}_Q = (x, yz)$;  
the proof of the statement 
is now achieved through an easy computation  in coordinates. 
\end{proof}

\subsection{A crepant resolution of $B^3$.}\label{subsection: crepant}Conjecture \ref{conj1} states that the log-canonical threshold of the pair 
$(X^3, \mc{I}_{\Delta_3})$ is $2$. This fact suggests that $B^3$ might admit  a crepant resolution. This is indeed the case, as we will prove in this subsection. 
\begin{remark}\label{rmk: universal}Let $X$ be a smooth algebraic surface. If $Y$ is any smooth variety admitting a  projective birational morphism $f: Y \rTo X^n$ over $X^n$ such that  such that 
$f^{-1}(\mc{I}_{\Delta_n})$ is an invertible ideal sheaf of $\FS_Y$, then, by the universal property of the blow-up, the map $f$ factors via 
the isospectral Hilbert scheme $B^n$ as 
\begin{diagram} Y & & \\
\dTo^h & \rdTo^{f} & \\ 
B^n & \rTo^p &  X^n 
\end{diagram}providing a resolution $h$ of $B^n$ such that 
$$ K_Y - h^* K_{B^n} = K_Y - h^* (p^* K_{X^n} + \FS_{B^n}(E)) = K_Y - f^* K_{X^n} + h^{-1} \mc{I}_E = K_Y - f^*K_{X^n} + f^{-1}(\mc{I}_{\Delta_n}) \;.$$ 
\end{remark}
\begin{remark}\label{rmk: universalcrepant}
By the previous remark, in order to find a crepant resolution of $B^n$, it is sufficient to build a smooth variety $Y$ and a  projective  birational map $f : Y \rTo X^n$ such that $f^{-1}(\mc{I}_{\Delta_n})$ is an invertible ideal isomorphic to the relative anticanonical 
$-K_{Y/X^n} =  f^*K_{X^n} -K_Y $. 
\end{remark}
\begin{remark}\label{rmk: C2}The questions posed in the previous two remarks are local over the base and analytical in nature. Hence, to find a resolution of $B^n$ in general, it is sufficient to find a smooth variety $Y$ and a birational map as in the remark \ref{rmk: universal} for $X = \mbb{C}^2$. Moreover, since 
in the identification (\ref{eq: identification}), the ideal sheaf $\mc{I}_{\Delta_n}$ corresponds to $\mc{I}_{\widetilde{D}_{n-1}} \boxtimes \FS_{\mbb{C}^2} $, by flat base change 
it is sufficient to find a smooth variety $Y$ and a projective birational morphism $f: Y \rTo (\mbb{C}^2)^{n-1}$ such that 
$f^{-1}(\mc{I}_{\widetilde{D}_{n-1}})$ is an invertible ideal. The resolution thus built will be crepant if and only if $f^{-1}(\mc{I}_{\widetilde{D}_{n-1}})$ is isomorphic to the anticanonical $-K_Y$. 
\end{remark}

For brevity's sake, in what follows, we will indicate the affine space $(\mbb{C}^2)^2$ with $V$, the subscheme $\widetilde{D}_2$ with 
$W$. Fix coordinates $(x, y, z, w)$ over $V$. The irreducible components of the subscheme $W$ are linear subspaces $W_1, W_2, W_3$, defined by the ideals $I_1 = (x,y)$, $I_2 = (z,w)$, $I_3 = (x-z, y-w)$. The ideal $\mc{I}_W$ is then given by $\langle q, I_1 I_2 I_3 \rangle$, where $q$ is the quadric $q = xw-yz$.  
\begin{pps}\label{pps: crepant}
The projective birational morphism $f: Y \rTo V$,  defined as the  composition of smooth blow-ups 
$$ Y = Y_3 \rTo^{f_3} Y_2 \rTo^{f_2} Y_1 \rTo^{f_1} V$$where $Y_1 = \mathrm{Bl}_{W_1} V$, $Y_2 = \mathrm{Bl}_{\wh{W}_2} Y_1$, 
$Y_3 = \mathrm{Bl}_{\wh{\wh{W}}_3} Y_2$, where $\wh{W}_2, \wh{\wh{W}}_3$ are the strict transforms of $W_2$, $W_3$ in $Y_1$, $Y_2$, respectively, is an isomorphism outside the locus $f^{-1}(W)$. Moreover, the ideal sheaf $f^{-1}(\mc{I}_W)$ is invertible and 
isomorphic to $-K_Y$. \end{pps}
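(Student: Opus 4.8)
The plan is to perform the three blow-ups explicitly in local analytic coordinates near the one geometrically interesting point, namely the origin $0 \in V$, which is the only point lying on all three components $W_1, W_2, W_3$; away from it the configuration is simpler and the needed assertions will follow from the local lemmas already established. First I would record the easy loci: outside $0$, the triple point set $W_1\cap W_2\cap W_3$ is empty, so locally at a point of (at most) a double intersection $W_i\cap W_j$ one is blowing up two smooth codimension-$2$ subvarieties meeting transversally inside the smooth hypersurface $H = \{q=0\}$ (recall $\mc{I}_W = \langle q, I_1I_2I_3\rangle$ and each $W_i \subset H$), so Lemma~\ref{lmm: w1w2} and Lemma~\ref{lmm: locallemma} apply directly and give both the invertibility of $f^{-1}(\mc{I}_W)$ and the identification with $-K_{Y/V}$ there. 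One also checks, using Remark~\ref{rmk: H} and Proposition~\ref{pps: lambda}, that at each stage the relevant strict transform is smooth and meets the accumulated exceptional divisor transversally, so that the next center is a smooth subvariety and the composite $f$ is an isomorphism over $V \setminus f^{-1}(W)$.

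The heart of the argument is the local computation at the origin. I would choose coordinates $(x,y,z,w)$ on $V$ as in the statement, so $I_1=(x,y)$, $I_2=(z,w)$, $I_3=(x-z,y-w)$, $q = xw-yz$, and track $\mc{I}_W$ through the three charts of each blow-up. After $f_1 = \mathrm{Bl}_{W_1}$ one computes $f_1^{-1}\mc{I}_W = \mc{I}_{E_{W_1}} \cdot \mc{I}_{W'}$ where $W' = \wh{W}_2 \cup \wh{\wh{W}}_3$-type residual scheme (here one uses that $\ord_{W_1}\mc{I}_W = 1$, since $q$ vanishes to order $1$ along $W_1$); crucially one must verify that $E_{W_1}$ contains no prime cycle of this residual scheme, so that the residual scheme is exactly the strict transform $\wh{W}_2 \cup$ (strict transform of $W_3$), by Proposition~\ref{pps: lambda}. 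Then after $f_2 = \mathrm{Bl}_{\wh{W}_2}$ one similarly peels off $\mc{I}_{E_{\wh{W}_2}}$ (again order $1$), reducing to the strict transform of $W_3$, which is now smooth; and $f_3 = \mathrm{Bl}_{\wh{\wh{W}}_3}$ makes the pullback principal, equal to $\FS_Y(-\widehat{\widehat{E_{W_1}}} - \widehat{E_{\wh{W}_2}} - E_{\wh{\wh{W}}_3})$. Simultaneously, since each center has codimension $2$, the relative canonical bundle picks up exactly one copy of each exceptional divisor: $K_{Y/V} = \FS_Y(\widehat{\widehat{E_{W_1}}} + \widehat{E_{\wh{W}_2}} + E_{\wh{\wh{W}}_3})$. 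Comparing the two gives $f^{-1}(\mc{I}_W) \simeq -K_Y$, which is the crepancy assertion in view of Remark~\ref{rmk: C2} and Remark~\ref{rmk: universalcrepant}.

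The main obstacle I anticipate is bookkeeping of the residual ideals: at each blow-up one has to confirm that the order of the total transform of $\mc{I}_W$ along the current center is exactly $1$ (so that dividing by $\mc{I}_E$ once leaves a genuine ideal, not a fractional one), and, more delicately, that no embedded component is created on the exceptional divisor — equivalently, via Proposition~\ref{pps: lambda}, that $E$ contains no prime cycle of the residual scheme — so that the residual scheme coincides with the (smooth) union of strict transforms and the next center is legitimately a smooth subvariety transverse to what has already been built. Concretely this reduces to a handful of chart computations with the explicit generators $q$ and $I_1I_2I_3$, which I would organize by symmetry: the three components $W_1, W_2, W_3$ play symmetric roles up to the linear change of variables permuting them, so it suffices to do the computation in the chart adapted to, say, the $W_2$-direction after the first blow-up and then invoke symmetry. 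The interaction of the strict transform of $W_3$ with the two exceptional divisors $E_{W_1}$ and $E_{\wh{W}_2}$ — one must see that $\wh{\wh{W}}_3$ still has codimension $2$ and is disjoint from the "bad" strata — is where I expect to spend the most care, but it is exactly the situation prepared for by Lemmas~\ref{lmm: w1w2} and~\ref{lmm: locallemma}, applied now with $H = \{q = 0\}$ playing the role of the ambient smooth hypersurface containing all the $W_i$.
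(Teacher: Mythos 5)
Your plan is essentially the paper's own proof: blow up $W_1$ first, use Proposition \ref{pps: lambda} to check that the residual (weak) transform acquires no embedded components along $E_1$ and hence equals $\wh{W}_2 \cup \wh{W}_3$ (in the paper this is done by exhibiting the weak transform as $\{\mu(\lambda-\mu)=0\}$ inside the strict transform of the quadric), then handle the last two blow-ups with Lemma \ref{lmm: w1w2} and compare with $K_{Y/V}=\wh{\wh{E}}_1+\wh{E}_2+E_3$. One correction to your phrasing: the hypersurface $\{q=0\}\subset V$ is singular at the origin (it is the cone over a smooth quadric), so it cannot play the role of the smooth hypersurface $H$ in Lemma \ref{lmm: w1w2} at the one point that matters; as in the paper, the lemma is invoked only after the first blow-up, where the strict transform of $\{q=0\}$ in $Y_1$ is smooth and contains $\wh{W}_2$ and $\wh{W}_3$ meeting transversally. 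Note also that the pairwise intersections $W_i\cap W_j$ are all equal to the origin, so away from $0$ the three planes are disjoint and no transversality lemma is needed there.
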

\begin{proof}As generators of the ideal $\mc{I}_W$ we can choose the polynomials
$q, xz(x-z), xw(y-w), yw(x-z), yw(y-w)$.  
Consider the first blow-up 
$Y_1 = \mathrm{Bl_{W_1} V} \simeq \mathrm{Bl}_0(\mbb{C}^2) \times \mbb{C}^2$ and denote with $E_1$ the exceptional divisor. 
We can write globally $$ x = \lambda u \;,\qquad y = \lambda v $$where $\lambda$ is the canonical section of $\FS_{Y_1}(E_1)$ 
and $u,v$ are homogeneous coordinates, thought as a basis in $H^0(\FS_{Y_1}(-E_1))$. By definition of weak transform we have 
$ \mc{I}_{f_1^{-1}(W)} = \mc{I}_{E_1} \cdot \mc{I}_{\widetilde{W}} $. The weak transform $\widetilde{W}$ is given by
the equations \begin{gather*} uw - vz = 0 \\ 
uz (\lambda u - z) = 0 \\ 
uw (\lambda v-w) = 0 \\ 
vw (\lambda u - z) = 0 \\ 
vw (\lambda v-w) = 0 
\end{gather*}We prove now that the weak transform $\widetilde{W}$ concides with the strict transform $\wh{W}$. 
By proposition \ref{pps: lambda} and its proof we just have to show that the morphism $\lambda: \FS_{\widetilde{W}}(-E_1) \rTo  \FS_{\widetilde{W}}$ is injective. Now, $\widetilde{W}$ is contained in the hypersurface $H$ of $Y_1$ defined by the equation $uw-vz=0$. Over $H$ 
we can globally write $z = \mu u$, $w = \mu v$, where $\mu$ can be seen  as a section in $H^0(\FS_H(E_1))$. Then $\widetilde{W}$ is given, inside $H$, by the equations \begin{gather*}
u^3 \mu(\lambda - \mu) = 0 \\ 
u v^2 \mu( \lambda-\mu) = 0 \\ 
uv^2 \mu( \lambda - \mu) = 0 \\ 
v^3 \mu (  \lambda-\mu) = 0
\end{gather*}Since $u$ and $v$ do not vanish at the same time, the weak transform is given by the equation 
$\mu(\lambda-\mu) =0$ inside the hypersurface $H$, with respect to the coordinates $([u, v], \lambda, \mu)$. Hence $\lambda$ is not 
zero divisor in $\widetilde{W}$ and $\widetilde{W} = \widehat{W}$. Hence $$\mc{I}_{f_1^{-1}(W)} =  \mc{I}_{E_1} \cdot \mc{I}_{\wh{W}}  
\;.
$$Now $\widehat{W}$ is clearly the union, inside $H$, of the two smooth surfaces $\widehat{W_2}$ and $\widehat{W_3}$ intersecting transversally along a smooth curve inside the exceptional divisor $E_1$. 
Consider now the blow-ups $f_2: \mathrm{Bl}_{\wh{W}_2} Y_1 \rTo Y_1$, with exceptional divisor $E_2$, and $f_3: \mathrm{Bl}_{\wh{\wh{W}}_3 } Y_2 \rTo Y_2$, with exceptional divisor $E_3$; denote with $\wh{\wh{E_1}}$ and $\wh{E_2}$ the strict transforms of $E_1$ and $E_2$ in $Y_3$, respectively. Let now $g :=f_2 \circ f_3$ and let $f := f_1 \circ g$. Then by lemma \ref{lmm: w1w2} we have
\begin{align*} f^{-1}(\mc{I}_W) = & g^{-1}(\mc{I}_{f_1^{-1}(W)}) 
= g^{-1}(\mc{I}_{E_1}) \cdot g^{-1}(\mc{I}_{\wh{W}})  =  \mc{I}_{\wh{\wh{E_1}}} \cdot \mc{I}_{\wh{E}_2} \cdot \mc{I}_{E_3}    \;,
\end{align*}where we used that $\widetilde{E_1} = \widehat{E_1}$ and $\widetilde{\widehat{E_1}} = \widehat{\widehat{E_1}}$ by remark \ref{rmk: H}. Hence $f^{-1}(\mc{I}_W)$ is invertible and isomorphic to $\FS_{Y}(-\wh{\wh{E_1}} - \wh{E}_2 - E_3)$; it is now easy to show that the latter  coincides with the anticanonical 
divisor $-K_Y$. \end{proof}
As an immediate consequence of remarks \ref{rmk: universal}, \ref{rmk: universalcrepant} and \ref{rmk: C2} we deduce the
\begin{crl}\label{crl: crepant}The map $f: Y \rTo V$ factors through a crepant resolution $h: Y \rTo \mathrm{Bl}_W V$. Consequenty 
the map $h \times \id : Y \times \mbb{C}^2 \rTo \mathrm{Bl}_W V \times \mbb{C}^2 \simeq B^3$ identifies to a crepant resolution of $B^3$. 
\end{crl}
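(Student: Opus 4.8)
The plan is to apply Proposition~\ref{pps: crepant} together with the general machinery assembled in remarks~\ref{rmk: universal}, \ref{rmk: universalcrepant} and \ref{rmk: C2}, so the corollary is essentially a packaging statement. Recall that Proposition~\ref{pps: crepant} produces a projective birational morphism $f: Y \rTo V = (\mbb{C}^2)^2$, an isomorphism away from $f^{-1}(W)$, such that $f^{-1}(\mc{I}_W) = \FS_Y(-\wh{\wh{E_1}} - \wh{E}_2 - E_3)$ is invertible and isomorphic to $-K_Y = -K_{Y/V}$ (since $V$ is affine space, $K_V$ is trivial; more precisely one checks $K_{Y/V} = \wh{\wh{E_1}} + \wh{E}_2 + E_3$ directly from the blow-up formulas for the three smooth centres, as is done at the end of that proof). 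In particular $Y$ is smooth, being an iterated blow-up of smooth varieties along smooth centres.

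First I would invoke the universal property of the blow-up: since $f^{-1}(\mc{I}_W)$ is invertible on the smooth variety $Y$ and $f$ is projective and birational over $V$, the morphism $f$ factors as $f = p \circ h$ with $h: Y \rTo \mathrm{Bl}_W V$ and $p: \mathrm{Bl}_W V \rTo V$ the blow-up, exactly as in the diagram of remark~\ref{rmk: universal}. Because $\mathrm{Bl}_W V$ coincides (up to the product with $\mbb{C}^2$, via (\ref{eq: identification})) with a chart of $B^3$, and the latter is normal and irreducible, $h$ is itself projective and birational onto a normal variety from a smooth one; since $f^{-1}(\mc{I}_W)$ is invertible, $h$ moreover contracts no divisor onto which the exceptional locus fails to be divisorial, so $h$ is a resolution of singularities of $\mathrm{Bl}_W V$. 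The crepancy is the computation displayed in remark~\ref{rmk: universal}: $K_Y - h^*K_{\mathrm{Bl}_W V} = K_Y - f^*K_V + f^{-1}(\mc{I}_W)$, and by Proposition~\ref{pps: crepant} the right-hand side is $K_Y + (-K_Y) = 0$ (using $f^*K_V = 0$), so $h$ is crepant.

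Next I would globalise and pass back to $B^3$. By remark~\ref{rmk: C2} the question of resolving $B^n$ is local over $X^n$ and analytic in nature, and under the identification (\ref{eq: identification}) the ideal $\mc{I}_{\Delta_3}$ corresponds to $\mc{I}_{\widetilde{D}_2} \boxtimes \FS_{\mbb{C}^2} = \mc{I}_W \boxtimes \FS_{\mbb{C}^2}$; since blowing up is compatible with the flat base change along the projection $r: V \times \mbb{C}^2 \rTo V$, we get $\mathrm{Bl}_{W \times \mbb{C}^2}(V \times \mbb{C}^2) \simeq \mathrm{Bl}_W V \times \mbb{C}^2$, which under $\phi$ is (an affine model of, hence by normality all of) $B^3$. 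Then $h \times \id: Y \times \mbb{C}^2 \rTo \mathrm{Bl}_W V \times \mbb{C}^2 \simeq B^3$ is projective birational, $Y \times \mbb{C}^2$ is smooth, and $K_{Y \times \mbb{C}^2/\,\mathrm{Bl}_W V \times \mbb{C}^2} = \mathrm{pr}_1^*K_{Y/\mathrm{Bl}_W V} = 0$ by flat base change, so $h \times \id$ is a crepant resolution of $B^3$.

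The only genuine subtlety — and thus the main point requiring care rather than real difficulty — is the verification that $h$ (equivalently $h \times \id$) is an honest \emph{resolution}, i.e.\ an isomorphism over the smooth locus of $B^3$ and with divisorial exceptional locus, rather than merely a proper birational map inverting the ideal; this follows because $f$ is an isomorphism outside $f^{-1}(W)$ by Proposition~\ref{pps: crepant}, hence $h$ is an isomorphism outside $h^{-1}(\text{exceptional divisor of } p)$, and the exceptional locus of $h$ is a union of (strict transforms of) the divisors $E_1, E_2, E_3$. Everything else is bookkeeping already carried out in the cited remarks, so the corollary follows immediately from Proposition~\ref{pps: crepant}.
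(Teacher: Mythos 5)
Your proposal is correct and takes essentially the same route as the paper, which deduces the corollary immediately from Proposition \ref{pps: crepant} combined with remarks \ref{rmk: universal}, \ref{rmk: universalcrepant} and \ref{rmk: C2} --- precisely the blow-up universal property factorization, the discrepancy computation $K_Y - h^*K_{\mathrm{Bl}_W V} = K_Y - f^*K_V + f^{-1}(\mc{I}_W) = 0$, and the flat base change along the product with $\mbb{C}^2$ that you spell out. Your additional remarks on the exceptional locus are harmless but not required for the conclusion.
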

Let now $X$ be an arbitrary smooth algebraic surface and let $\Delta_{I_1}$, $\Delta_{I_2}$, $\Delta_{I_3}$ be the pairwise diagonals $\Delta_{I}$, $|I| =2$, 
taken in whatever order. We have the following 
\begin{theorem}The composition of blow-ups $s := s_1 \circ s_2 \circ s_3$
$$ Y:= \mathrm{Bl}_{\widehat{\widehat{\Delta}}_{I_3}} \!\! Y_2 \rTo^{s_3} Y_2:=\mathrm{Bl}_{\widehat{\Delta}_{I_2}} Y_1 \rTo^{s_2} Y_1 := \mathrm{Bl}_{\Delta_{I_1}} X^3 \rTo^{s_1} X^3 $$where $\widehat{\Delta}_{I_2}$ and $\widehat{\widehat{\Delta}}_{I_3}$ are the strict transforms of $\Delta_{I_2}$ and $\Delta_{I_3}$ in $Y_1$ and $Y_2$, respectively, is a log-resolution of the pair $(X^3, \mc{I}_{\Delta_3})$ such that $s^{-1}(\mc{I}_{\Delta_3})$ is an invertible ideal isomorphic to the relative anticanonical $-K_{Y/X^3}$. Hence $s$ factors through a crepant resolution $g: Y \rTo B^3$ of the isospectral Hilbert scheme~$B^3$. 
\end{theorem}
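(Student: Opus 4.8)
The plan is to reduce the statement about the arbitrary surface $X$ to the local computation over $V = (\mbb{C}^2)^2$ already carried out in Proposition \ref{pps: crepant}, and then invoke the universal property of the blow-up as formalized in Remarks \ref{rmk: universal}, \ref{rmk: universalcrepant}, \ref{rmk: C2}. Concretely, I would first recall that by Remark \ref{rmk: changevar}, after the linear change of coordinates $\phi$ of (\ref{eq: change}), the ideal $\mc{I}_{\Delta_3}$ on $X^3$ is, locally analytically (or after an \'etale/smooth base change), the pullback of $\mc{I}_{\widetilde{D}_2} \boxtimes \FS_{\mbb{C}^2}$; and $\widetilde{D}_2 = W \subseteq V$ with $W_1, W_2, W_3$ corresponding to the three pairwise diagonals $\Delta_{I_1}, \Delta_{I_2}, \Delta_{I_3}$. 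So the tower of three blow-ups $s = s_1 \circ s_2 \circ s_3$ along the (strict transforms of the) pairwise diagonals is, locally on the base, the tower $f = f_1 \circ f_2 \circ f_3$ of Proposition \ref{pps: crepant} crossed with $\mbb{C}^2$.

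Second, I would verify the two geometric hypotheses that make the blow-up tower behave as in the local model: namely that the three pairwise diagonals $\Delta_{I_1}, \Delta_{I_2}, \Delta_{I_3}$ in $X^3$ are smooth of codimension $2$, and that along their strict transforms the configuration stays ``as expected'' --- i.e. after blowing up $\Delta_{I_1}$, the strict transforms of $\Delta_{I_2}$ and $\Delta_{I_3}$ become smooth surfaces meeting transversally along a smooth curve inside the exceptional divisor. Away from the small diagonal $\Delta_{1,2,3}$ this is clear, since there at most one pairwise diagonal passes through a given point; along the small diagonal, this is precisely the content of the explicit coordinate computation over $V$, and it transfers to $X^3$ because a neighbourhood of a point of $\Delta_{1,2,3}$ in $X^3$ is analytically (or smooth-locally, in the sense of Lemma \ref{lmm: w1w2}'s proof) isomorphic to a neighbourhood of the origin in $V \times \mbb{C}^2$ carrying the three diagonals to $W_1 \times \mbb{C}^2$, $W_2 \times \mbb{C}^2$, $W_3 \times \mbb{C}^2$.

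Third, granting the local model, the conclusions follow: $s$ is an isomorphism outside $s^{-1}(\Delta_3)$; the total transform $s^{-1}(\mc{I}_{\Delta_3})$ is invertible, equal to $\FS_Y(-\widehat{\widehat{E}}_1 - \widehat{E}_2 - E_3)$ for the three exceptional divisors (with $\widehat{\widehat{E}}_1, \widehat{E}_2$ the appropriate strict transforms), and this divisor has simple normal crossing support together with $\mathrm{exc}(s)$, so $s$ is a genuine log-resolution of $(X^3, \mc{I}_{\Delta_3})$. A direct discrepancy count --- each blow-up of a smooth codimension-$2$ center contributes $+1$, and the strict-transform bookkeeping exactly matches the exponents --- shows $K_{Y/X^3} = \FS_Y(\widehat{\widehat{E}}_1 + \widehat{E}_2 + E_3) \cong s^{-1}(\mc{I}_{\Delta_3})^{\vee}$, i.e. $s^{-1}(\mc{I}_{\Delta_3}) \cong -K_{Y/X^3}$. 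By Remark \ref{rmk: universalcrepant} (applied as in Remark \ref{rmk: C2}) the map $s$ then factors through a birational morphism $g: Y \rTo B^3$ which is crepant, $K_Y = g^* K_{B^3}$, completing the proof.

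The main obstacle is the transversality/smoothness verification along the small diagonal $\Delta_{1,2,3}$: one must be sure that the three pairwise diagonals, after blowing up the first, really degenerate in $X^3$ exactly as $W_1, W_2, W_3$ do in $V$, with no extra incidences and no failure of transversality hidden by the quotient structure near deep strata. This is handled by the analytic (or smooth-base-change) local model identifying a neighbourhood of a point of $\Delta_{1,2,3}$ with $V \times \mbb{C}^2$ --- the same device used in Lemma \ref{lmm: w1w2} --- after which the explicit coordinate computation of Proposition \ref{pps: crepant} applies verbatim; the only care needed is to check that blow-ups and strict transforms commute with this smooth base change, which is standard.
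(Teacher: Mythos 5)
Your proposal is correct and follows essentially the same route as the paper: locally over $X^3$ the tower $s$ is identified, via the coordinate change $\phi$ of (\ref{eq: change}), with $\phi^{-1}\circ(f\times \id_{\mbb{C}^2})$ for the map $f$ of Proposition \ref{pps: crepant}, and the conclusion is then drawn from that proposition together with Remarks \ref{rmk: universal}, \ref{rmk: universalcrepant} and \ref{rmk: C2}. The transversality and stratum-by-stratum checks you spell out (and the discrepancy bookkeeping) are exactly what the paper leaves implicit in the phrase ``locally over $X^3$ the map $s$ coincides with $\phi^{-1}\circ(f\times\id_{\mbb{C}^2})$,'' so your write-up is a slightly more detailed version of the same argument.
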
\begin{proof}Locally over $X^3$, the map $s$ coincides precisely with $\phi^{-1} \circ (f \times \id_{\mbb{C}^2}) $, where $f$ is the birational map built in theorem
\ref{pps: crepant} and $\phi$ is the map (\ref{eq: change}). The theorem is then an immediate consequence of proposition \ref{pps: crepant} and remarks \ref{rmk: universal} and \ref{rmk: universalcrepant}. 
\end{proof}

\subsection{An $\perm_3$-equivariant resolution of $B^3$}
Consider the $4$-dimensional vector space $V = (\mbb{C}^2)^2$ with coordinates $(x,y,z,w)$ 
and the subscheme $W = W_1 \cup W_2 \cup W_3$ introduced in  subsection \ref{subsection: crepant}.  Consider the blow-up $f_1: Y_1 := \mathrm{Bl}_0 (V) \rTo V$ of $V$ at the origin and let $E_0$ be its 
 exceptional divisor; since it can be identified with the total space of the Hopf line bundle over the projective space $\mbb{P}(V)$, 
the variety $Y_1$ is equipped with a fibration $Y_1 \rTo \mbb{P}(V)$. 
Now, the polinomial $q = xw-yz$ defines a smooth quadric $Q$ in $\mbb{P}(V)$, which can be seen as a smooth subvariety of $Y_1$ inside $E_0$, thanks to the embedding of $\mbb{P}(V)$ into $Y_1$
given by 
 the zero section of the Hopf bundle. 
 \begin{pps}\label{pps: permlocalres} The birational morphism $f: Y \rTo V$ defined as the composition of smooth blow-ups 
$$ Y =Y_3 \rTo^{f_3} Y_2 \rTo^{f_2} Y_1 \rTo^{f_1} V $$where  
$Y_2 = \mathrm{Bl}_{\widehat{W}}(Y_1)$, 
$Y_3 = \mathrm{Bl}_{\widehat{Q}}(Y_2)$, 
where $\widehat{W}$ and $\widehat{Q}$ are the strict transforms of $W$ and $Q$ in $Y_1$ and $Y_2$, respectively, is an isomorphism outside $f^{-1}(W)$. Moreover the ideal sheaf $f^{-1}(\mc{I}_W)$ is given by 
$$ f^{-1}(\mc{I}_W) = \FS_Y(- 2 \widehat{\widehat{E_0}} -  \widehat{E_{\widehat{W}}} - 3 E_{\widehat{Q}} ) $$where $E_{\widehat{W}}$ and $E_{\widehat{Q}}$  are the exceptional divisors 
in $Y_2$ and $Y_3$, respectively, and where $\widehat{E_{\widehat{W}}}$ and $\widehat{\widehat{E_0}}$ are the strict transforms of $E_{\widehat{W}}$ and $E_0$ in $Y$. 
 \end{pps}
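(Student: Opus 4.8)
The plan is to reduce the whole statement to an explicit local computation in suitable holomorphic (or formal) coordinates, exactly in the spirit of Lemmas \ref{lmm: w1w2} and \ref{lmm: locallemma}. First I would recall the description of $\mc{I}_W = \langle q, I_1 I_2 I_3 \rangle$ from subsection \ref{subsection: crepant}, with $q = xw - yz$ and the $W_i$ linear. Blowing up the origin $0 \in V$ first, I would work on the standard affine charts of $Y_1 = \mathrm{Bl}_0(V)$; writing $x = \lambda u$, $y = \lambda v$, $z = \lambda s$, $w = \lambda t$ on the chart where one coordinate is invertible, the generators $q$ and $I_1I_2I_3$ vanish along $E_0$ to order $2$ and $3$ respectively (this matches $\ord_0 \mc{I}_W$ being governed by the lowest-degree generators, cf. remark \ref{rmk: changevar}), so $\mc{I}_{f_1^{-1}(W)} = \mc{I}_{E_0}^2 \cdot \mc{I}_{\widetilde{W}}$, and the weak transform $\widetilde W$ is cut out by $q$ (now of degree $0$ in $\lambda$, i.e.\ $ut - vs$) together with the cubic $I_1I_2I_3$ divided by $\lambda^3$. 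The key structural observation is that inside $E_0 \cong \mbb{P}(V)$ the residual scheme of $W$ is supported on the smooth quadric $Q = \{q=0\}$: the cubic part vanishes identically on $Q$, so after this blow-up the residual ideal is exactly $\mc{I}_{\widehat W} \cap \mc{I}_Q^{?}$ with the right multiplicities, and one checks directly that $\widetilde W = \widehat W$ is the transverse union of the three strict transforms $\widehat W_i$, each a smooth surface meeting $E_0$ along a line on $Q$.

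Next I would handle the second blow-up along $\widehat W$. Since $\widehat W = \widehat W_1 \cup \widehat W_2 \cup \widehat W_3$ is a union of three smooth surfaces pairwise transverse (their pairwise intersections being the three coordinate points, or rather the curves, lying on $Q \subset E_0$), blowing up the (reduced, but reducible) $\widehat W$ is locally the blow-up of $(xy\text{-type ideals})$; I would invoke Lemma \ref{lmm: w1w2} (applied in the local model $M = \mbb{C}^3$ with $H = E_0$, $W_1, W_2$ the two local branches meeting in $H$) on the overlaps, and more simply the fact that blowing up a transverse union of smooth subvarieties in a smooth ambient is again smooth, to conclude $Y_2 = \mathrm{Bl}_{\widehat W} Y_1$ is smooth and that $f_2^{-1}$ of the residual ideal picks up the exceptional divisor $E_{\widehat W}$ with multiplicity $1$. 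After this, the total transform of $\mc{I}_W$ is $\mc{I}_{\widehat{E_0}}^2 \cdot \mc{I}_{E_{\widehat W}} \cdot \mc{J}$, where $\mc{J}$ is the pulled-back residual ideal still supported on (the strict transform of) $Q$. Here Lemma \ref{lmm: locallemma} is the right tool: $Q \subseteq E_0$, and the relevant intersections $\widehat W \cap E_0$ lie inside $Q$ with the transversality/smoothness hypotheses satisfied, so passing $\mc{J}$ through the second blow-up behaves as asserted there, and $\mc{J}$ restricts to (a power of) $\mc{I}_{\widehat Q}$.

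Then I would carry out the third blow-up along $\widehat Q$ (the strict transform of the smooth quadric surface, now a smooth codimension-$\geq 2$ center in $Y_2$), producing exceptional divisor $E_{\widehat Q}$. The claim is that $\mc{I}_W$ vanishes along $\widehat Q$ to order $3$: indeed on $Y_1$ the quadric $q$ vanishes along $Q$ to order $1$, the cubic $I_1I_2I_3$ vanishes along $Q$ to order (at least) $1$ as well, but the right bookkeeping — combining the $q$-generator with the factored cubic and the contribution already absorbed into $E_0$ and $E_{\widehat W}$ — gives residual order $3$ along $\widehat Q$; I would verify this by the explicit local computation in the coordinates $([u:v:s:t], \lambda, \mu)$ adapted to $H = E_0$ and $q$, much as is done in the proof of Proposition \ref{pps: crepant}. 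Collecting the three multiplicities, the total transform on $Y = Y_3$ is $\FS_Y(-2\widehat{\widehat{E_0}} - \widehat{E_{\widehat W}} - 3 E_{\widehat Q})$, which is the assertion. I expect the main obstacle to be precisely this multiplicity bookkeeping across the three successive blow-ups — in particular checking that after removing the first exceptional divisor with multiplicity $2$, the residual ideal really is (up to the $E_{\widehat W}$-factor) the ideal of $\widehat Q$ with multiplicity $3$ and not something with embedded components off $Q$; the honest way to dispatch this is the explicit coordinate computation (on each of the affine charts of $Y_1$, using that $u,v,s,t$ never vanish simultaneously), together with the criterion of Proposition \ref{pps: lambda} to guarantee that each intermediate weak transform equals the strict transform so that the exceptional divisors meet everything transversally. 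Once the coordinate identities are in hand, the global statement follows since, as in Lemmas \ref{lmm: w1w2} and \ref{lmm: locallemma}, the general case is obtained from the local model by a smooth base change.
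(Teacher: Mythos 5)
The central gap is in your first step: you assert that ``one checks directly that $\widetilde W = \widehat W$'' after blowing up the origin, and this is false (it also contradicts your own hedged formula $\mc{I}_{\widehat W}\cap\mc{I}_Q^{?}$ two lines earlier). A direct computation with the generators $q,\ xz(x-z),\ xw(y-w),\ yw(x-z),\ yw(y-w)$ shows that the weak transform of $W$ under $f_1$ is strictly larger than the strict transform: one has $\mc{I}_{\widetilde W}=\mc{I}_Q\cap\mc{I}_{\widehat W}$, i.e.\ the smooth quadric $Q\subset E_0\cong\mbb{P}(V)$ appears as a genuine two-dimensional component of the weak transform, with multiplicity one (after dividing by $\lambda^2$ the only generator not divisible by $\lambda$ is the one coming from $q$, so along $E_0$ the residual ideal cuts out the whole quadric, not just the three lines $\widehat W\cap E_0$). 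This identification is the heart of the proof: it is exactly why the third blow-up along $\widehat Q$ is needed at all, and it is what produces the coefficient $3$ of $E_{\widehat Q}$, via $3=2\cdot\ord_{\widehat Q}\widehat{E_0}+\ord_{\widehat Q}E_{\widehat W}+1=2\cdot 1+0+1$, where the last $1$ is the residual factor $\mc{I}_{\widehat Q}$ left over after Lemma \ref{lmm: locallemma} gives $f_2^{-1}(\mc{I}_Q\cap\mc{I}_{\widehat W})=\mc{I}_{E_{\widehat W}}\cdot\mc{I}_{\widehat Q}$. If your claim $\widetilde W=\widehat W$ were correct, the pull-back of $\mc{I}_W$ would already be invertible on $Y_2$ and the last blow-up would contribute only $2E_{\widehat Q}$ (from $\widehat{E_0}^{\,2}$), contradicting the formula you are trying to prove. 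So the ``multiplicity bookkeeping'' you defer to a later coordinate check is not a routine verification at the end: it is the one computation that must be done, precisely, at the first step.

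A secondary but relevant error: the three components $\widehat W_i$ are pairwise \emph{disjoint} in $Y_1$, not ``pairwise transverse along curves''. The planes $W_i$ meet each other only at the origin, which has been blown up, and their tangent directions give three skew lines in $E_0\cong\mbb{P}^3$; hence $\widehat W$ is a smooth codimension-$2$ center, $\widehat W\cap E_0$ is a smooth codimension-$3$ subvariety contained in $Q$, and Lemma \ref{lmm: w1w2} (which governs two branches meeting inside a hypersurface) is not the relevant tool here --- Lemma \ref{lmm: locallemma} is, and its hypotheses ($W$ smooth of codimension $2$, $W\cap H$ smooth of codimension $3$, contained in $Q$) would actually fail if the $\widehat W_i$ intersected as you describe. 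Apart from these two points the overall strategy (local coordinates, weak-versus-strict transform control via Proposition \ref{pps: lambda}, and the two lemmas) is the same as the paper's; what is missing is the precise determination of the residual ideal after the first blow-up, which is where the whole statement is decided.
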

 
\begin{proof}Since $\ord_0 \mc{I}_W = 2$, we have 
$$ \mc{I}_{f_1^{-1}(W)} = \mc{I}_{E_0}^2 \cdot \mc{I}_{\widetilde{W}}$$where $\widetilde{W}$ is the weak transform of $W$ in $Y_1$. By a computation in coordinates, using the same generators for $\mc{I}_W$ we used in the proof of theorem \ref{pps: crepant}, 
one gets 
$$ \mc{I}_{\widetilde{W}} = \mc{I}_Q \cap \mc{I}_{\widehat{W}} \;,$$that is, the weak transform $\widetilde{W}$ is the scheme-theoretic union of the quadric $Q$ and the strict transform $\widehat{W}$ of $W$ in $Y_1$, which is a smooth codimension $2$ subvariety with three irreducible components 
$\widehat{W}_i$, $i = 1, \dots, 3$. Moreover $\widehat{W} \cap E_0$ is contained in $Q$ and is precisely the union of three skew lines in $ E_0 \simeq \mbb{P}(V)$; hence $\widehat{W} \cap E_0$ is a smooth codimension $3$ subvariety of $Y_1$. Therefore the hypothesis of lemma  \ref{lmm: locallemma} are satisfied; this means that, when blowing up the strict transform $\widehat{W}$ in $Y_1$ one gets 
$$ f_2^{-1}(\mc{I}_Q \cap  \mc{I}_{\widehat{W}} ) = \mc{I}_{E_{\widehat{W}}} \cdot \mc{I}_{\widehat{Q}} \;.$$Since $\ord_{\widehat{W}} \widehat{E_0} = 0$, we get
$$ (f_1 \circ f_2)^{-1} ( \mc{I}_{W} )= \mc{I}^2_{\widehat{E_0}} \cdot \mc{I}_{E_{\widehat{W}}} \cdot \mc{I}_{\widehat{Q}} \;.$$Remembering that 
$\ord_{\widehat{Q}} \widehat{E_0} =1$, the last blow-up now yields 
the formula in the statement. 
\end{proof}
\begin{crl}\label{crl: permlocalres}The map $f: Y \rTo V$ factors through a resolution $h: Y \rTo \mathrm{Bl}_W V$. Consequenty 
the map $h \times \id : Y \times \mbb{C}^2 \rTo \mathrm{Bl}_W V \times \mbb{C}^2 \simeq B^3$ identifies to an $\perm_3$-equivariant resolution of $B^3$. 
\end{crl}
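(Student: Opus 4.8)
The plan is to derive the statement from Proposition~\ref{pps: permlocalres}, the universal property of the blow-up, the identification~(\ref{eq: identification}), and the functoriality of blow-ups with respect to group actions. First I would use that, by Proposition~\ref{pps: permlocalres}, the ideal sheaf $f^{-1}(\mc{I}_W)$ is invertible on $Y$. The universal property of the blow-up $\mathrm{Bl}_W V \rTo V$ then produces a unique morphism $h \colon Y \rTo \mathrm{Bl}_W V$ through which $f$ factors. Since $Y$ is smooth --- a composition of blow-ups of smooth centres in the smooth variety $V$ --- and $h$ is projective and birational (it is so after composing with $\mathrm{Bl}_W V \rTo V$, and away from $W$ every map in sight is an isomorphism by Proposition~\ref{pps: permlocalres}), the map $h$ is a resolution of singularities of $\mathrm{Bl}_W V$.

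Next I would recall the product identification $\mathrm{Bl}_W V \times \mbb{C}^2 \simeq B^3$ already used in Corollary~\ref{crl: crepant}: by~(\ref{eq: identification}) with $n = 3$, so that $\widetilde{D}_2 = W$, one has $\mc{I}_{\Delta_3} \simeq \phi^*(\mc{I}_W \boxtimes \FS_{\mbb{C}^2})$ for the coordinate change $\phi$ of~(\ref{eq: change}); blowing up an ideal pulled back along the flat projection $V \times \mbb{C}^2 \rTo V$ commutes with flat base change (remark~\ref{rmk: C2}), whence $B^3 = \mathrm{Bl}_{\Delta_3}(\mbb{C}^2)^3 \simeq \mathrm{Bl}_W V \times \mbb{C}^2$. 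Composing, $h \times \id \colon Y \times \mbb{C}^2 \rTo \mathrm{Bl}_W V \times \mbb{C}^2 \simeq B^3$ is a resolution of $B^3$.

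The only genuinely new ingredient, compared with the crepant construction, is the $\perm_3$-equivariance, and this is where I would spend most effort. The permutation action of $\perm_3$ on $(\mbb{C}^2)^3$ fixes $\alpha = \sum_i x_i$ and $\beta = \sum_i y_i$ and acts linearly on $z_1, w_1, z_2, w_2$ (each $x_i - x_j$ is a linear combination of $z_1 = x_1 - x_2$ and $z_2 = x_1 - x_3$, and likewise for the $w$'s); transported through $\phi$ it is therefore the product of a linear $\perm_3$-action on $V$ with the trivial action on the $\mbb{C}^2$-factor, and $\Delta_3$ corresponds to $W \times \mbb{C}^2$. All three centres blown up in Proposition~\ref{pps: permlocalres} are invariant under the linear action on $V$: the origin because the action is linear; $W = \widetilde{D}_2$ because $\perm_3$ merely permutes its components $W_1, W_2, W_3$ (the transforms of the pairwise diagonals); and the quadric $Q = V(q)$, $q = z_1 w_2 - w_1 z_2$, because $q$ is $\perm_3$-anti-invariant --- it is one of the determinantal generators $\Delta(\bar p, \bar q)$ of remark~\ref{rmk: generators} --- so $V(q)$ is preserved. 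By functoriality of blow-ups along invariant centres the $\perm_3$-action lifts compatibly to $Y_1$, $Y_2$ and $Y_3 = Y$, so $f$, hence $h$, is $\perm_3$-equivariant; then $h \times \id$ is equivariant for the product action, which under $\phi$ is the permutation action on $B^3$, and the corollary follows.

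I do not expect a serious obstacle: the remaining work is bookkeeping --- writing out the coordinate identities that exhibit the decomposed $\perm_3$-action on $V \times \mbb{C}^2$, and noting that the successive strict transforms $\widehat{W}$ and $\widehat{Q}$ stay invariant (automatic once the base centres are invariant and the maps equivariant). The only mild point of care is checking that $q$ is $\perm_3$-anti-invariant and that the action is genuinely trivial on the $\mbb{C}^2$-factor, both of which are short explicit computations.
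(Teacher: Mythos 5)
Your argument is correct and follows essentially the same route the paper intends for this corollary: Proposition \ref{pps: permlocalres} plus the universal property of the blow-up (remark \ref{rmk: universal}) and the identification (\ref{eq: identification}) with flat base change (remark \ref{rmk: C2}) give the factorization $h$ and the isomorphism $\mathrm{Bl}_W V \times \mbb{C}^2 \simeq B^3$, while the $\perm_3$-equivariance — which the paper leaves implicit — comes, exactly as you say, from the invariance of the successive centres (the origin, $W$, and the quadric cut out by the anti-invariant $q$) together with uniqueness in the universal property. The only minor imprecision is that $Q$ is the projectivized quadric inside $E_0 \simeq \mbb{P}(V)$ rather than the affine hypersurface $V(q) \subset V$, but your invariance argument applies verbatim to it.
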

Consider now the case of an arbitrary smooth algebraic surface $X$. Consider the blow-up $s_1: Y_1:=\mathrm{Bl}_{\Delta_{123}} X^3 \rTo X^3$ of the small diagonal $\Delta_{123}$ 
in $X^3$ and let $E_0$ be its exceptional divisor. The situation is locally, over $X^3$, analogous to the one just studied. Hence it is now clear that $s_1^{-1}(\mc{I}_{\Delta_3}) = \mc{I}_{E_0}^2 \cdot (\mc{I}_Q \cap \mc{I}_{\widehat{\Delta_3}})$, where $\widehat{\Delta_3}$ is the strict transform of $\Delta_3$ in $Y_1$ and where $Q$ is a quadric subbundle of $\mbb{P}(N_{\Delta_{123}/ X^3})$ over $\Delta_{123}$ and hence a smooth subvariety of $Y_1$ inside $E_0$. 
We have the following theorem
\begin{theorem}The composition of smooth blow-ups $s: =s_1 \circ s_2 \circ s_3$: 
$$ Y := \mathrm{Bl}_{\widehat{Q}} Y_2 \rTo^{s_3} Y_2 := \mathrm{Bl}_{\widehat{\Delta_3}} Y_1 \rTo^{s_2} Y_1 \rTo^{s_1} X^3 $$where $\widehat{\Delta}_3$ and $\widehat{Q}$ are the strict transforms of $\Delta_3$ and $Q$ in $Y_1$ and $Y_2$, respectively,  defines a $\perm_3$-equivariant log-resolution of the pair $(X^3, \mc{I}_{\Delta_3})$ and hence factors through a $\perm_3$-equivariant log-resolution $g: Y \rTo B^3$ of the isospectral Hilbert scheme $B^3$. 
\end{theorem}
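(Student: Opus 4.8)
The plan is to derive this global statement from the local Proposition~\ref{pps: permlocalres} by a flat base-change argument, exactly in the way the crepant resolution of $B^3$ was obtained from Proposition~\ref{pps: crepant}.

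First I would reduce everything to a local question on $X^3$. Whether $s$ is a log-resolution, and the precise shape of $s^{-1}(\mc{I}_{\Delta_3})$, can be checked \'etale-locally (equivalently, analytically-locally) on the base $X^3$. Away from the small diagonal $\Delta_{123}$ the map $s_1$ is an isomorphism, the quadric $Q$ lies inside the exceptional divisor $E_0$ so $s_3$ is an isomorphism there as well, and $\Delta_3$ is locally a single smooth pairwise diagonal $\Delta_{ij}$ of codimension $2$; hence $s$ coincides locally with $\mathrm{Bl}_{\Delta_{ij}}$ and all assertions are clear. So it remains to treat a neighbourhood of a point $p\in\Delta_{123}$. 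There the coordinate change $\phi$ of (\ref{eq: change}) together with the identification (\ref{eq: identification}) turns $(X^3,\mc{I}_{\Delta_3})$ into $((\mbb{C}^2)^2,\mc{I}_W)\times\mbb{C}^2$, sends $\Delta_{123}$ to $\{0\}\times\mbb{C}^2$, the strict transform $\widehat{\Delta_3}$ to $\widehat{W}\times\mbb{C}^2$, and $Q$ to the quadric $\{xw-yz=0\}\subseteq E_0$ of subsection~\ref{subsection: crepant}. Since blowing up commutes with flat base change, the tower $s_1,s_2,s_3$ is, over this neighbourhood, the tower $f_1,f_2,f_3$ of Proposition~\ref{pps: permlocalres} crossed with $\mbb{C}^2$. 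Therefore $Y$ is smooth (each of $\Delta_{123}$, $\widehat{\Delta_3}$, $\widehat{Q}$ is smooth — the last two by the local model), $s$ is an isomorphism outside $s^{-1}(\Delta_3)$, and
$$ s^{-1}(\mc{I}_{\Delta_3}) = \FS_Y\bigl(-2\,\widehat{\widehat{E_0}} - \widehat{E_{\widehat{\Delta_3}}} - 3\,E_{\widehat{Q}}\bigr) $$
is invertible, where $E_{\widehat{\Delta_3}}$ and $E_{\widehat{Q}}$ are the exceptional divisors of $s_2$ and $s_3$ and the hats denote iterated strict transforms, exactly as in Proposition~\ref{pps: permlocalres}. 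Moreover this divisor together with $\mathrm{exc}(s)=\widehat{\widehat{E_0}}\cup\widehat{E_{\widehat{\Delta_3}}}\cup E_{\widehat{Q}}$ has simple normal crossing support — a further point inherited from the local model, where it rests on the explicit coordinate computations underlying Lemma~\ref{lmm: locallemma}. Hence $s$ is a log-resolution of $(X^3,\mc{I}_{\Delta_3})$.

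Next I would record the $\perm_3$-equivariance. The group $\perm_3$ permutes the factors of $X^3$; the small diagonal $\Delta_{123}$ is pointwise fixed, so $s_1$ is equivariant and $\perm_3$ acts on $Y_1$. The big diagonal $\Delta_3$ is $\perm_3$-invariant, hence so is its strict transform $\widehat{\Delta_3}$, and $s_2$ is equivariant. Finally $Q$ is intrinsically the quadric subbundle of $\mbb{P}(N_{\Delta_{123}/X^3})$ cut out by a $\perm_3$-anti-invariant quadratic form on the normal bundle $N_{\Delta_{123}/X^3}$: in the local model it is $q=(x_1y_2-x_2y_1)+(x_2y_3-x_3y_2)+(x_3y_1-x_1y_3)$, which satisfies $\sigma\cdot q=\sgn(\sigma)\,q$ and so has $\perm_3$-invariant zero locus. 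Thus $Q$, and therefore $\widehat{Q}$, is $\perm_3$-invariant, $s_3$ is equivariant, and $s$ is $\perm_3$-equivariant.

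Finally, since $s\colon Y\to X^3$ is projective birational with $s^{-1}(\mc{I}_{\Delta_3})$ invertible, remark~\ref{rmk: universal} provides the factorization $s=p\circ g$ with $g\colon Y\to B^3$ projective birational and $Y$ smooth; the simple normal crossing property established above makes $g$ a log-resolution of $B^3$, and uniqueness in the universal property of the blow-up forces $g$ to be $\perm_3$-equivariant, because $\perm_3$ acts compatibly on $Y$, on $B^3=\mathrm{Bl}_{\Delta_3}X^3$ and on $X^3$. I expect the main obstacle to lie in the bookkeeping of the localization step — verifying carefully that the tower of blow-ups over $X^3$ is, locally, precisely the tower of Proposition~\ref{pps: permlocalres} crossed with $\mbb{C}^2$, in particular that the iterated strict transforms $\widehat{\Delta_3}$, $\widehat{Q}$, $\widehat{\widehat{E_0}}$, $\widehat{E_{\widehat{\Delta_3}}}$ are the expected smooth subvarieties — and in pinning down that $Q$ is a globally well-defined $\perm_3$-invariant subbundle rather than merely the local hypersurface $\{q=0\}$. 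Once this identification is secured, the smoothness of $Y$, the invertibility of $s^{-1}(\mc{I}_{\Delta_3})$, the normal crossing condition and the equivariance all follow directly from Proposition~\ref{pps: permlocalres}.
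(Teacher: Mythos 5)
Your proposal is correct and follows essentially the same route as the paper: reduce to the local model of Proposition~\ref{pps: permlocalres} via the coordinate change $\phi$ of (\ref{eq: change}) and the identification (\ref{eq: identification}), then invoke Corollary~\ref{crl: permlocalres} and Remarks~\ref{rmk: universal}--\ref{rmk: universalcrepant} to factor through $B^3$. You merely spell out details the paper leaves implicit (the $\perm_3$-invariance of the quadric subbundle $Q$ via the anti-invariance of the defining form, and the stratified check away from $\Delta_{123}$), which is consistent with the paper's two-line argument.
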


\begin{proof}The map $s$ is clearly $\perm_3$-equivariant and, locally over $X^3$, coincides with the map $\phi^{-1} \circ (f \times \id_{\mbb{C}^2}) $, where $f$ is the map introduced in proposition \ref{pps: permlocalres} and where $\phi$ is the map (\ref{eq: change}). The content of the theorem is then 
a consequence of proposition \ref{pps: permlocalres}, corollary \ref{crl: permlocalres} and remarks \ref{rmk: universal} and \ref{rmk: universalcrepant}. 
\end{proof}

\begin{remark}This resolution is not crepant, as one gets easily $K_{Y/X^3} 
+ s^{-1}(\mc{I}_{\Delta_3}) = \FS(\widehat{\widehat{E_0}} + E_{\widehat{Q}})$, where 
$E_{\widehat{Q}}$ is the exceptional divisor in $Y_3$ and where
$\widehat{\widehat{E_0}}$ is the strict transform of $E_0$ in $Y$. 
\end{remark}
\begin{remark}The step $Y_2$ coincides with the Fulton-MacPherson compactification $X[3]$ of 
$X^3~\setminus~\Delta_3$ (see \cite{FultonMacPherson1994}). 
\end{remark}
\begin{remark}By construction, the resolution $Y$ is equipped with a $\perm_3$-action. The stabilizer of any point for this action is trivial. 
Hence, passing to the quotient modulo $\perm_3$, the induced map $\hat{f}: Y/\perm_3 \rTo S^3 X$ provides an explicit resolution of $S^3X$ which factors through 
the Hilbert scheme of points $X^{[3]} = B^3/ \perm_3 $. 
\end{remark}

\footnotesize

\vspace{1cm}
\noindent
{Departamento de  Matem\'atica, Puc-Rio, Rua Marqu\^es S\~ao Vicente 225, 22451-900 G\'avea, Rio de Janeiro, RJ, Brazil}

\noindent 
{\it Email address:} {\tt lucascala@mat.puc-rio.br}

\end{document}